\newtheorem{theorem}{Theorem}[section]
\newtheorem{lemma}[theorem]{Lemma}
\newtheorem{proposition}[theorem]{Proposition}
\theoremstyle{definition}
\newtheorem{definition}[theorem]{Definition}
\newtheorem{remark}[theorem]{Remark}
\newtheorem{general remarks}[theorem]{General remarks}
\newcommand{\Id}{\operatorname{Id}}
\newcommand{\ben}{\begin{enumerate}}
\newcommand{\een}{\end{enumerate}}
\begin{document}

\title[Simple $G$- graded  algebras and their polynomial identities]{Simple $G$-graded algebras and their polynomial identities}

\author{Eli Aljadeff}

\author{Darrell Haile}

\keywords{graded algebra, polynomial identity}

\maketitle

\centerline{\today}

\begin{abstract}

Let $G$ be any group and $F$ an algebraically closed field of
characteristic zero. We show that any $G$-graded finite
dimensional associative $G$-simple algebra over $F$ is determined
up to a $G$-graded isomorphism by its $G$-graded polynomial
identities. This result was proved by Koshlukov and Zaicev in case
$G$ is abelian.

\end{abstract}

\bigskip
\section*{Introduction}

The purpose of this article is to prove that finite dimensional (associative) simple $G$-graded algebras over an algebraically
closed field $F$ of characteristic zero are determined up to
$G$-graded isomorphism by their $G$-graded identities. Here $G$ is
any group. In case $G$ is abelian, the result was established by Koshlukov and Zaicev
\cite{KZ}. Analogous results were obtained for Lie algebras by Kushkulei and Razmyslov \cite{KR}
and for Jordan algebras by Drensky and Racine \cite{DR}   and recently for nonassociative algebras by Shestakov and Zaicev \cite{SZ}.

The structure theory of finite dimensional $G$-graded algebras and
in particular of simple $G$-graded  algebras plays a crucial role
in the proof of the representability theorem for $G$-graded PI
algebras and in the solution of the Specht problem (that is, that  the
$T$-ideal of $G$-graded identities is finitely based) for such
algebras (see Aljadeff and Kanel-Belov \cite{AB}).

Recall that the representability theorem for $G$-graded algebras
says in particular, that if $W$ is an affine $G$-graded algebra
which is PI as an ordinary algebra, then there exists a finite
dimensional algebra $A$ which satisfies precisely the same
$G$-graded identities as $W$.

A fundamental part of the proof of the representability theorem is
the construction of special  finite dimensional $G$-graded
algebras which are called basic. It turns out that if $B$ is a
basic algebra, then $B$ admits $G$-graded polynomials which are
called Kemer. These are multilinear polynomials, nonidentities,
which admit alternating sets of homogeneous elements of degree $g
\in G$ whose cardinalities are maximal possible \cite{AB}. In the
special case where the basic algebra has no radical, it is in fact
$G$-simple, and in that case, the cardinalities of the alternating
sets in a Kemer polynomial coincide with the dimensions of the
homogeneous components. Clearly, no nonidentity polynomial of a
$G$-simple algebra (or in fact of any $G$-graded algebra) can have larger alternating sets.

 The key point in the proof of representability is that the finite
dimensional algebra $A$ which satisfies the same $G$-graded
identities as $W$ can be expressed as the direct sum of basic
algebras and hence the $T$-ideal of $G$-graded identities of $A$
is the intersection of the corresponding ideals of identities of
the basic algebras which appear in the decomposition. However, the
basic algebras that appear in the decomposition of $A$ are not
known to be unique. In fact, even the basic algebras themselves
are not determined in general by their identities (as a result of
the interaction of the simple components via the radical). But if
the basic algebra is $G$-semisimple (and hence $G$-simple), the
main result of the paper says that in that case the answer is
positive.

To state the result precisely we recall some basic definitions.
Let $k$ be an arbitrary field and let $G$ be a group.   A
$k$-algebra $A$ is said to be $G$-graded if for each $g\in G$
there is a  $k$-subspace $A_g$ of $A$ (possibly zero) such that
for all $g,h\in G$, we have $A_gA_h\subseteq A_{gh}$.   Such a
$G$-graded algebra is said to be a simple $G$-graded algebra (or a
$G$-simple algebra) if there are no nontrivial homogeneous ideas,
or equivalently if the ideal generated by each nonzero homogeneous
element is the whole algebra.

A $G$-graded polynomial is a polynomial in the free
algebra $k \langle X_G \rangle$ where $X_G$ is the union of sets
$X_g$, $g \in G$ and $X_g = \{x_{1,g}, x_{2,g},...\}$. In other
words, $X_G$ consists of countably many variables of degree $g$
for every $g \in G$. We say that a polynomial
$p(x_{1,g_{i_1}},...,x_{n,g_{i_n}})$ in $k \langle X_G \rangle$ is
a $G$-graded identity of a $G$-graded algebra $A$ if $p$ vanishes
for every graded evaluation on $A$. The set of $G$-graded
identities of $A$ is an ideal of $k \langle X_{G} \rangle$ which
we denote by $Id_{G}(A)$. Moreover it is a $T$-ideal, that is,   it is
closed under $G$-graded endomorphisms of $k \langle X_G \rangle$.

It is known that if $k$ has characteristic zero, the $T$-ideal of
identities is generated as a $T$-ideal by multilinear polynomials,
that is  graded polynomials whose monomials are permutation of
each other (up to a scalar from the field).  Moreover we may
assume in addition that all of the monomials have the same
homogeneous degree. We can now state the main result of the paper.

\medskip

\noindent THEOREM:
Let $A$ and $B$ be two finite dimensional simple $G$-graded algebras over $F$ where $F$ is an
algebraically closed field of characteristic zero.  Then $A$ and $B$ are $G$-graded isomorphic if and
only if $Id_{G}(A)=Id_{G}(B)$.

\medskip

A key ingredient in the proof is the result of  Bahturin, Sehgal
and Zaicev (\cite{BSZ}, Theorem \ref{Bahturin-Sehgal-Zaicev}) that
determines the structure of a simple $G$-graded algebra as a
combination of a fine graded algebra and an elementary graded
algebra. In section $1$ we state this result and use it to define
the notion of a \underbar{presentation} of the given $G$-simple
algebra.  It is a consequence of our main theorem that any two
graded isomorphic $G$-simple algebras have equivalent
presentations.   However one can prove this uniqueness result
directly, without the use of identities and we present such a
proof in the last section of this paper.

Another  motivation for studying $G$-graded polynomial identities
of finite dimensional $G$-simple algebras is the possible existence of a ``versal"
object. It is well known that if $A$ is the algebra of $n \times
n$-matrices, the corresponding algebra of generic elements has a
 central localization which is an Azumaya algebra and is
versal with respect to all $k$-forms (in the sense of Galois
descent) of $A$ where $k$-is any field of zero characteristic.
Furthermore, extending the center to the field of fractions, one
obtains a division algebra, the so called generic division
algebra, which is a form of $A$. The algebra of generic elements
can be constructed in a different way. It is well known that it is
isomorphic to the the relatively free algebra of $A$, namely, the
free algebra on a countable set of variables modulo the $T$-ideal
of identities.

 Given a
$G$-graded finite dimensional algebra one can construct the
corresponding $G$-graded relatively free algebra, and it is of
interest to know whether there exists a versal object in this case
as well. It turns out that this is so for some specific cases as
in \cite{AHN} and \cite{AKar} and \cite{AKassel}.

Clearly, if two nonisomorphic finite dimensional $G$-simple algebras $A$ and $B$
had the same $T$-ideal of identities, there could not exist a versal object for $A$  (or $B$). So in
view of  our main theorem,  it is natural to ask whether  for an
arbitrary finite dimensional $G$-simple algebra there exists a corresponding versal
object.

\section{Preliminaries}

We start by recalling some terminology. Let $G$ be any group and
$A$ a finite dimensional simple $G$-graded algebra. As mentioned
in the introduction, our proof is based on a result of Bahturin,
Sehgal and Zaicev\cite{BSZ} in which they present any finite dimensional $G$-graded
simple algebra by means of two type of $G$-gradings,  fine
and elementary. Before stating their theorem let us give two
examples, one of each kind.

Given a finite subgroup $H$ of $G$ we can consider the group
algebra $FH$ with the natural $H$-grading. This algebra is
$H$-simple, in fact an $H$-division algebra in the sense that every nonzero homogeneous element is invertible.
 Moreover we can view the algebra $FH$ as a $G$-graded
algebra where the $g$-homogeneous component is set to be $0$ if
$g$ is not in $H$. More generally we may consider any twisted
group algebra $F^{\alpha}H$, where $\alpha$ is a $2$-cocycle of
$H$ with invertible values in $F$, again as a $G$-graded algebra.
As in the case where the cocycle is trivial, the algebra
$F^{\alpha}H$ is a finite dimensional $G$-division algebra.  We refer to such a grading as a fine
grading. The second type of grading is called elementary. Let
$M_{r}(F)$ be the algebra of $r \times r$ matrices over the field
$F$. Fix an $r$-tuple $(p_{1},...,p_{r}) \in G^{(r)}$, and assign the
elementary matrix $e_{i,j}$, $1 \leq i,j \leq n$ the homogeneous
degree $p_{i}^{-1}p_{j}$. Note that the product of the elementary
matrices is compatible with their homogeneous degrees and so we
obtain a $G$-grading on $M_{r}(F)$. Furthermore, since $M_{r}(F)$
is a simple algebra it is also $G$-simple.

The result of Bahturin, Sehgal and Zaicev\cite{BSZ} is the graded
version of Wedderburn's structure theorem for finite dimensional
simple algebras.  Their result says that every finite dimensional
$G$-simple algebra is isomorphic to a $G$-graded algebra which is
the tensor product of two $G$-simple algebras, one with fine
grading (hence a graded division algebra)  and the other a full
matrix algebra with an elementary grading. Here is the precise
statement.

\begin{theorem} \label{Bahturin-Sehgal-Zaicev}
\cite{BSZ}Let $A$ be a finite dimensional $G$-simple algebra over
an algebraically closed field $F$ of characteristic zero. Then
there exists a finite subgroup $H$ of $G$, a $2$-cocycle
$\alpha:H\times H\rightarrow F^{*}$ where the action of $H$ on $F$
is trivial, an integer $r$ and a $r$-tuple
$(p_{1},p_{2},\ldots,p_{r})\in G^{(r)}$ such that $A$ is
$G$-graded isomorphic to $C=F^{\alpha}H\otimes M_{r}(F)$ where
$C_{g}=span_{F}\{u_{h}\otimes e_{i,j}:g=p_{i}^{-1}hp_{j}\}$. Here
$u_{h}\in F^{\alpha}H$ is a representative of $h\in H$ and
$e_{i,j}\in M_{r}(F)$ is the $(i,j)$ elementary matrix.

In particular the idempotents $1\otimes e_{i,i}$ as well as the
identity element of $A$ are homogeneous of degree $e\in G$.
\end{theorem}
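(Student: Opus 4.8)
The plan is to run the Artin--Wedderburn argument inside the category of $G$-graded algebras. First, some preliminary reductions. $A$ is semisimple: its radical is a graded nilpotent ideal, hence zero by $G$-simplicity (note $A^2=A\neq 0$), so $A$ has a unit. Decomposing $1_A=\sum_g u_g$ into homogeneous components and comparing degrees in $1_A a=a=a 1_A$ for homogeneous $a$ shows that $u_e$ is already a two-sided identity, so $1_A\in A_e$. Finally $A_e$ is semisimple: since $\chara F=0$ the trace form $\tau(x)=\Tr(L_x)$ of $A$ ($L_x=$ left multiplication by $x$) is nondegenerate, and because $\tau$ vanishes on $A_gA_h$ whenever $gh\neq e$, its restriction to $A_e$ is still nondegenerate, so $A_e$ has no nonzero nilpotent ideal. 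Since $F$ is algebraically closed, it follows that $fA_ef=Ff$ for any idempotent $f$ that is primitive in $A_e$.

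Next I would extract the fine part. Fix such an $f$ and set $D:=fAf$, a finite-dimensional $G$-graded algebra with $D_e=fA_ef=Ff$. The key claim is that $D$ is a $G$-graded division algebra. Given $0\neq x\in D_g$, $G$-simplicity gives $AxA=A$, so sandwiching a representation $1_A=\sum c_k x d_k$ between two copies of $f$ and using $fxf=x$ yields $f\in DxD$; picking out the degree-$e$ part of this identity, which lies in $D_e=Ff$, produces homogeneous $u,v\in D$ with $uxv=\lambda f$ and $\lambda\neq 0$. Then $ux$ has a right inverse in the finite-dimensional algebra $D$, hence is invertible, and therefore so is $x$. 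Consequently $H:=\{g\in G: D_g\neq 0\}$ is a finite subgroup of $G$, each $D_h$ with $h\in H$ is one-dimensional (if $x,y\in D_h$ are nonzero then $x^{-1}y\in D_e=Ff$, so $y\in Fx$), and choosing $0\neq u_h\in D_h$ with $u_e=f$ gives relations $u_gu_h=\alpha(g,h)u_{gh}$ defining a $2$-cocycle $\alpha\colon H\times H\to F^*$ with $D\cong F^\alpha H$ as graded algebras.

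Then I would build the elementary part and assemble. Choose a maximal orthogonal family $f=f_1,\dots,f_r$ of idempotents of $A_e$, each primitive in $A_e$; it is automatically complete, $f_1+\cdots+f_r=1_A$. Each $f_i$ is homogeneously equivalent to $f_1$ in $A$: since $Af_1A$ is a nonzero graded ideal it equals $A$, so $f_iAf_i=(f_iAf_1)(f_1Af_i)$; writing $f_i=\sum_k a_kb_k$ with $a_k\in f_iAf_1$, $b_k\in f_1Af_i$ and collecting homogeneous pieces, some homogeneous $a\in f_iAf_1$, $b\in f_1Af_i$ has $ab\in(f_iAf_i)_e=Ff_i$ equal to $\lambda f_i$ with $\lambda\neq 0$, whence $ba\in(f_1Af_1)_e=Ff_1$, say $ba=\mu f_1$, and the associativity relation $(ab)a=a(ba)$ forces $\mu=\lambda$; after rescaling $a$ this gives $E_{i1}\in f_iAf_1$, $E_{1i}\in f_1Af_i$ with $E_{i1}E_{1i}=f_i$ and $E_{1i}E_{i1}=f_1$. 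Setting $E_{ij}:=E_{i1}E_{1j}$, orthogonality of the $f_j$ gives $E_{ij}E_{kl}=\delta_{jk}E_{il}$, so the $E_{ij}$ span a homogeneous copy of $M_r(F)$, and writing $p_i:=\deg E_{1i}$ one has $p_1=e$ and $\deg E_{ij}=p_i^{-1}p_j$. Since $d\mapsto E_{i1}dE_{1j}$ is a linear bijection $D\to f_iAf_j$, the Peirce decomposition becomes $A=\bigoplus_{i,j}E_{i1}DE_{1j}$; then $u_h\otimes e_{ij}\mapsto E_{i1}u_hE_{1j}$ is a unital algebra homomorphism $F^\alpha H\otimes M_r(F)\to A$, multiplicative by the matrix-unit relations and bijective because both sides have dimension $r^2|H|$, and it respects the grading of the statement since $\deg(E_{i1}u_hE_{1j})=p_i^{-1}hp_j$, so it is a $G$-graded isomorphism $A\cong C$. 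Finally $1\otimes e_{ii}$ corresponds to $E_{ii}=f_i\in A_e$ and $1_A=\sum_i f_i$, which is the last assertion.

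The main obstacle is the recurring need to convert the purely ideal-theoretic hypothesis of $G$-simplicity into multiplicative information — concretely, the claims that $fAf$ is a graded division algebra and that all primitive idempotents of $A_e$ are homogeneously equivalent. Both are handled by the same device: $G$-simplicity forces a two-sided ideal to be all of $A$, one projects onto the degree-$e$ component (which lives in the field $Ff_i$), and then finite-dimensionality (one-sided inverses are inverses) together with the cancellation identity $(ab)a=a(ba)$ closes the argument. The only other place a hypothesis is genuinely needed is $\chara F=0$, for the nondegeneracy of the trace form in the reduction to ``$A_e$ semisimple''.
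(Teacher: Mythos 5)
The paper does not actually prove Theorem~\ref{Bahturin-Sehgal-Zaicev}; it is quoted from \cite{BSZ}. So there is no internal proof to compare against, and what you have written is a reconstruction of the Bahturin--Sehgal--Zaicev argument. Your reconstruction is essentially correct and follows the standard route: prove $A$ is semisimple with homogeneous unit, prove $A_e$ semisimple, extract the fine part as $fAf$ for a primitive idempotent $f\in A_e$ and show it is a graded division algebra $F^\alpha H$, and then build homogeneous matrix units out of the Peirce decomposition with respect to $1_A=f_1+\cdots+f_r$. All the multiplicative verifications in the last paragraph (the cancellation trick $(ab)a=a(ba)$, the matrix-unit relations, the linear bijections $D\to f_iAf_j$, the degree count $\deg(E_{i1}u_hE_{1j})=p_i^{-1}hp_j$, the dimension count) check out.

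The one place that is genuinely incomplete is the very first sentence: you assert that the Jacobson radical of $A$ ``is a graded nilpotent ideal'' and dispose of it by $G$-simplicity, but you give no argument that $J(A)$ is graded, and you do not invoke $\chara F=0$ there. This is in fact precisely where characteristic zero matters for $A$ itself, not just for $A_e$: over $\mathbb{F}_p$ the group algebra of $\mathbb{Z}/p$, graded by $\mathbb{Z}/p$ in the obvious way, is graded-simple yet its radical (the augmentation ideal) is not graded. The fix is cheap and uses exactly the trace-form device you already deploy for $A_e$: the radical of the associative bilinear form $(x,y)\mapsto\Tr(L_{xy})$ \emph{is} a graded ideal of $A$ (because $\Tr(L_{A_gA_h})=0$ when $gh\neq e$, so if an element lies in the radical of the form, so does each of its homogeneous components), it contains $J(A)$ (nilpotent elements give nilpotent $L_{xy}$), and by $G$-simplicity it is either $0$ or all of $A$; the latter is impossible in characteristic zero, since $\Tr(L_c^n)=\Tr(L_{c^n})=0$ for all $n$ would force every $L_c$ nilpotent and hence $A$ nilpotent, against $A^2=A\neq 0$. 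With that repair made (or with a citation for the gradedness of $J(A)$ in characteristic zero), the proof is sound.
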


\begin{definition} Given a finite dimensional $G$-simple algebra $A$, let $H$, $\alpha \in Z^{2}(H,F^{*})$ and $(p_1,\ldots,p_r) \in G^{(r)}$ be as in the theorem above.
We denote the triple $(H,
\alpha, (p_1,\ldots,p_r))$ by $P_{A}$ and refer to it as a
\underbar{presentation} of the $G$-graded algebra $A$.  We will refer to $r$ as the matrix size of $P_A$.

\end{definition}

Clearly, a presentation determines the $G$-graded structure of $A$
up to a $G$-graded isomorphism. On the other hand, a $G$-graded
algebra may admit more than one presentation and so we need to
introduce a suitable equivalence relation on presentations.

We start by establishing some conditions on presentations which
yield $G$-graded isomorphic algebras.

\begin{lemma}
Let $A$  be a finite dimensional $G$-simple algebra with presentation
$P_{A}=(H, \alpha, (p_1,\ldots,p_r)).$  The following ``moves'' (and their
composites) on the presentation determine $G$-graded algebras
$G$-graded isomorphic to $A$.

\begin{enumerate}

\item

Permuting the $r$-th tuple, that is $A^{'} \cong
F^{\alpha_{A}}H_{A}\otimes_{F}M_{r}(F)$ and the elementary grading is
given by $(p_{\pi(1)},...,p_{\pi(r)})$ where $\pi \in Sym(r)$.

\item

Replacing any entry $p_i$ of $(p_1,...,p_r)$ by any element
$h_{0}p_{i} \in Hp_{i}$ (changing right $H$-coset
representatives).

\item

For an arbitrary $g\in G$,

\begin{enumerate}

\item  replacing $H$ with the conjugate $H^{g}=gHg^{-1}$,

\item replacing
the cocycle $\alpha$ by $\alpha^g$ where
$$ \alpha^g(gh_{1}g^{-1}, gh_{1}g^{-1})=\alpha(h_1,h_2)$$ and

 \item  shifting the tuple $(p_1,...,p_r)$ by $g$, that is,  replacing the tuple $(p_1,...,p_r)$ by $(gp_1,...,gp_r)$.

\end{enumerate}

\end{enumerate}
\end{lemma}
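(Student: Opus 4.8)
The plan is to verify each of the three moves separately by exhibiting an explicit $G$-graded isomorphism $C \to C'$, where $C = F^\alpha H \otimes M_r(F)$ is the model algebra attached to $P_A$ and $C'$ is the model algebra attached to the modified presentation. In each case the underlying algebra isomorphism will be a familiar one (a permutation conjugation, the identity, or the isomorphism $F^\alpha H \cong F^{\alpha^g} H^g$ induced by $h \mapsto ghg^{-1}$), and the only real content is checking that it respects the $G$-gradings, i.e. that it carries $C_g$ to $C'_g$ for every $g \in G$. Recall from Theorem \ref{Bahturin-Sehgal-Zaicev} that $C_g = \span_F\{u_h \otimes e_{i,j} : g = p_i^{-1} h p_j\}$, so in every case it suffices to track what happens to the ``degree formula'' $g = p_i^{-1} h p_j$ under the move.

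For move (1), conjugation by the permutation matrix $P_\pi = \sum_i e_{\pi(i),i}$ sends $e_{i,j}$ to $e_{\pi(i),\pi(j)}$, hence $u_h \otimes e_{i,j} \mapsto u_h \otimes e_{\pi(i),\pi(j)}$; since the new tuple is $(p_{\pi(1)},\dots,p_{\pi(r)})$, the element $u_h \otimes e_{\pi(i),\pi(j)}$ has new degree $p_{\pi(i)}^{-1} h p_{\pi(j)} = p_i^{-1} h p_j$ (using the new tuple's $i$-th entry is $p_{\pi(i)}$), so degrees are preserved. For move (2), replacing $p_i$ by $h_0 p_i$ changes the degree of $u_h \otimes e_{i,j}$ (for $i$ fixed in the first slot) from $p_i^{-1} h p_j$ to $p_i^{-1} h_0^{-1} h p_j$; one checks that the linear map which is the identity on $1 \otimes e_{k,\ell}$ for $k,\ell \neq i$ and multiplies appropriately by $u_{h_0}$ on the $i$-th row and column — concretely, conjugation by the homogeneous invertible element $D = u_{h_0} \otimes e_{i,i} + 1 \otimes (1 - e_{i,i})$ after identifying things correctly, or more cleanly the map $u_h \otimes e_{i,j} \mapsto u_{h_0^{-1}} u_h \otimes e_{i,j}$ type adjustment — is a graded algebra isomorphism; this is the one move where the cocycle bookkeeping (associativity constants $\alpha(h_0^{-1}, \cdot)$ etc.) needs a line of care. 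For move (3), the algebra map $\phi\colon F^\alpha H \to F^{\alpha^g} H^g$, $u_h \mapsto u_{ghg^{-1}}$ is an isomorphism precisely by the definition of $\alpha^g$ given in the statement, and tensoring with $\id_{M_r}$ gives $u_h \otimes e_{i,j} \mapsto u_{ghg^{-1}} \otimes e_{i,j}$; the new tuple is $(gp_1,\dots,gp_r)$, so the new degree of the image is $(gp_i)^{-1}(ghg^{-1})(gp_j) = p_i^{-1} g^{-1} g h g^{-1} g p_j = p_i^{-1} h p_j$, again unchanged. Finally, the ``and their composites'' clause is immediate since a composite of $G$-graded isomorphisms is a $G$-graded isomorphism.

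The main obstacle — such as it is — is move (2): one must be slightly careful that the natural candidate map is well-defined as an algebra homomorphism in the presence of the nontrivial cocycle, and that after changing the coset representative $p_i$ one is still free to choose $u_h$ as the representative of $h$ (the cocycle $\alpha$ itself does not change under move (2), only the tuple does, which is what makes the degree shift by $h_0^{-1}$ on exactly the rows/columns indexed by the changed slot). I would handle this by writing the isomorphism as conjugation by an explicit invertible homogeneous element of $C$ — namely the diagonal-type element that is $u_{h_0} \otimes e_{i,i}$ in the $i$-th slot and $1 \otimes e_{k,k}$ elsewhere — so that it is manifestly an algebra automorphism of the underlying (ungraded) $C$, and then only the grading check remains, which is the one-line degree computation above. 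Moves (1) and (3) are routine once the degree formula is written down, and I would present them tersely.
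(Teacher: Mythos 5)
Your approach is the same as the paper's: exhibit explicit maps between the model algebras and verify that each respects the degree formula $g=p_i^{-1}hp_j$. The paper states the same formulas (your conjugation by $D$ in move (2) recovers exactly its four-case map) and leaves the verification to the reader; your write-up usefully supplies the checks.

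However, the degree check you give for move (1) is incorrect as written. If the new tuple is $(q_1,\dots,q_r)$ with $q_k=p_{\pi(k)}$, then the degree of $u_h\otimes e_{\pi(i),\pi(j)}$ \emph{in the new grading} is $q_{\pi(i)}^{-1}h\,q_{\pi(j)} = p_{\pi^2(i)}^{-1}h\,p_{\pi^2(j)}$, not $p_{\pi(i)}^{-1}h\,p_{\pi(j)}$, and the further equality $p_{\pi(i)}^{-1}h\,p_{\pi(j)} = p_i^{-1}h\,p_j$ fails for general $\pi$. The fix is either to conjugate by $P_{\pi^{-1}}$, so that $u_h\otimes e_{i,j}\mapsto u_h\otimes e_{\pi^{-1}(i),\pi^{-1}(j)}$ has new degree $q_{\pi^{-1}(i)}^{-1}h\,q_{\pi^{-1}(j)} = p_i^{-1}h\,p_j$ as required, or to read the map $u_h\otimes e_{i,j}\mapsto u_h\otimes e_{\pi(i),\pi(j)}$ as going from the new algebra to the old one, in which case $u_h\otimes e_{i,j}$ has new degree $q_i^{-1}hq_j = p_{\pi(i)}^{-1}hp_{\pi(j)}$, matching the old degree of its image. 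One further small point: the element $D = u_{h_0}\otimes e_{i,i} + 1\otimes(1-e_{i,i})$ is generally \emph{not} homogeneous (its summands have degrees $p_i^{-1}h_0p_i$ and $e$), so it should not be called homogeneous; the argument survives anyway, because conjugation by any invertible element of $C$ is an ungraded automorphism, and the graded-ness of the resulting map $C\to C'$ is precisely the case-by-case degree computation you indicate.
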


\begin{proof}

We describe the isomorphism maps.

(1)
$$ u_{h}\otimes e_{k,l} \longmapsto u_{h}\otimes e_{\pi(k),\pi(l)} $$

(2)
$$u_{h}\otimes e_{k,l} \longmapsto u_{h}\otimes e_{k,l}$$ if $k \neq i$ and $l\neq i$.
$$u_{h}\otimes e_{i,l} \longmapsto u_{h_{0}}u_{h}\otimes e_{i,l}$$ if $l\neq i$.
$$u_{h}\otimes e_{k,i} \longmapsto u_{h}u^{-1}_{h_{0}}\otimes e_{k,i}$$ if $k \neq i$.
$$u_{h}\otimes e_{i,i} \longmapsto u_{h_{0}}u_{h}u^{-1}_{h_{0}}\otimes e_{i,i}$$

(3)
$$ u_{h}\otimes e_{k,l} \longmapsto u_{ghg^{-1}}\otimes e_{k,l} $$

We leave the reader the task of showing that  these maps are indeed
isomorphisms.
\end{proof}

We will call these isomorphisms \underbar{basic} \underbar{moves} of type (1), (2), or (3).
We will call presentations $P_{A}$ of the $G$-simple algebra $A$ and
$P_{B}$ of the $G$-simple algebra $B$ equivalent if one
is obtained from the other by a (finite) sequence of basic moves.
This is clearly an equivalence relation on presentations.
It follows from the lemma that algebras with equivalent
presentations are $G$-graded isomorphic.

Let $A$ be $G$-simple with presentation $P_A$.  Our proof
requires, in terms of the given presentation $P_A$,  a rather
precise understanding of the structure of the subalgebra
$A_{N}=\sum_{g \in N} A_{g}$ (of $A$) where $N$ is an arbitrary
subgroup of $G$.  To this end we introduce an equivalence relation
on the elements of the $r$-tuple $(p_1,...,p_r)$:  We will say
 $i,j \in \{1,...,r\}$  are
$N$-\textit{related} in $P_{A}$  if there exists $h\in
H_A$ such that $p^{-1}_ihp_{j} \in N$. It is easy to see that this
is indeed an equivalence relation. We may assume (after permuting
the elements of the tuple $(p_1,...,p_r)$ if needed) that the
tuple is decomposed into subtuples whose elements are the
corresponding equivalence classes. We denote the classes by
$(p_{i_1},p_{i_1+1},...,p_{i_1+k_{1}-1})$,
$(p_{i_2},p_{i_2+1},...,p_{i_2+k_{2}-1})$,...,
$(p_{i_d},p_{i_d+1},...,p_{i_d+k_{d}-1})$.

In order to get a better understanding of the $N$-elements in the
presentation $P_{A}$, we focus our attention on one equivalence
class, say $(p_{i_1},p_{i_1+1},...,p_{i_1+k_{1}-1})$, and so, for
convenience we change the notation by letting $k=k_1$  and setting
$(g_1,...g_k) = (p_{i_1},p_{i_1+1},...,p_{i_1+k_{1}-1})$.  We let
$A_{N,1}$ denote the $F$-space spanned by the elements $u_h\otimes
e_{i,j}$ where $i,j \in \{1,...,k\}$ and $g_i^{-1}hg_j$ is in $N$.

For $i=1,...,k$ we consider the following subgroup of $N$,

$$ \Omega_{g_{i}}=g_{i}^{-1}Hg_{i} \cap N$$ and let $d_i$ be its order.

\begin{proposition} \label{structure of N-graded subalgebra}

With the notation as above, the following hold.

\begin{enumerate}

\item

For $1\leq i,j \leq k$ the subgroups $\Omega_{g_{i}}$ and
$\Omega_{g_{j}}$ are conjugate to each other by an element of $N$.
In particular $d_i=d_j $.

\item

For $i,j \in \{1,...,k\}$ the set

$$g_{i}^{-1}Hg_{j} \cap N$$

is a left $\Omega_{g_{i}}$-coset and a right
$\Omega_{g_{j}}$-coset . In particular the order of
$$g_{i}^{-1}Hg_{j} \cap N$$ is $d_i(=d_j)$.

\item

The subalgebra $A_{N,1}$ is $G$-simple with presentation   $$P_{A_{N,1}}=( N \cap
g_{1}^{-1}Hg_{1}, g_1(\alpha), (n_1,...,n_k))$$
for some
elements $n_1,...,n_k$, where $n_j \in N \cap
g_{1}^{-1}Hg_{j}$.

\end{enumerate}

\begin{proof}

This is straightforward.  We will prove only
the first statement. By the equivalence condition, there are
elements $h \in H$ and $n \in N$ such that $g_{i}^{-1}hg_{j}=n$.
Hence
$$ \Omega_{g_{i}}=g_{i}^{-1}Hg_{i} \cap N=$$
$$ng_{j}^{-1}h^{-1}Hhg_{j}n^{-1} \cap N=$$
$$n(g_{j}^{-1}Hg_{j}\cap N)n^{-1}=$$
$$ n(\Omega_{g_{i}})n^{-1}$$
as desired.

\end{proof}

\end{proposition}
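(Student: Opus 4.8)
The plan is to dispose of (2) and (3); for (1) nothing more is needed than the conjugation identity already displayed. We may assume $A=F^{\alpha}H\otimes M_{r}(F)$ with the grading of Theorem \ref{Bahturin-Sehgal-Zaicev}. The organizing observation is that $A_{N}=\sum_{g\in N}A_{g}$ is spanned by the homogeneous elements $u_{h}\otimes e_{i,j}$ ($1\le i,j\le r$) with $p_{i}^{-1}hp_{j}\in N$, and such a basis element is nonzero only when $i$ and $j$ are $N$-related; hence $A_{N}$ is a direct sum of two-sided ideals indexed by the $N$-classes, and the ideal attached to the class relabeled $(g_{1},\dots,g_{k})$ is exactly $A_{N,1}$. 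In particular $A_{N,1}$ is a graded subalgebra with unit $f=\sum_{i=1}^{k}1\otimes e_{i,i}$, and I normalize the cocycle so that $u_{e}=1$.

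For (2): fix $i,j$ in the class. Since $i,j$ are $N$-related, $S=g_{i}^{-1}Hg_{j}\cap N$ is nonempty; pick $s_{0}\in S$. From $(g_{i}^{-1}Hg_{j})(g_{j}^{-1}Hg_{i})=g_{i}^{-1}Hg_{i}$ and $(g_{i}^{-1}Hg_{i})(g_{i}^{-1}Hg_{j})=g_{i}^{-1}Hg_{j}$, together with $s,s_{0}\in N$, one gets $ss_{0}^{-1}\in g_{i}^{-1}Hg_{i}\cap N=\Omega_{g_{i}}$ for every $s\in S$, and conversely $\Omega_{g_{i}}s_{0}\subseteq S$; so $S=\Omega_{g_{i}}s_{0}$ is a left $\Omega_{g_{i}}$-coset. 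Running the same argument on the other side (equivalently, inverting and using $S^{-1}=g_{j}^{-1}Hg_{i}\cap N$) shows $S$ is also a right $\Omega_{g_{j}}$-coset, and the cardinality claim follows from (1), which gives $|\Omega_{g_{i}}|=d_{i}=d_{j}=|\Omega_{g_{j}}|$.

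For (3): choose $n_{j}\in N\cap g_{1}^{-1}Hg_{j}$ with $n_{1}=e$, and write $n_{j}=g_{1}^{-1}h_{j}g_{j}$, $h_{j}\in H$, $h_{1}=e$. The corner $fA_{N,1}f$ with $f=1\otimes e_{1,1}$ is spanned by the $u_{h}\otimes e_{1,1}$ with $g_{1}^{-1}hg_{1}\in N$; the relevant $h$'s form the finite subgroup $H\cap g_{1}Ng_{1}^{-1}$ of $H$, and $h\mapsto g_{1}^{-1}hg_{1}$ carries it isomorphically onto $K:=\Omega_{g_{1}}=N\cap g_{1}^{-1}Hg_{1}$; transporting the restriction of $\alpha$ along this isomorphism yields precisely the cocycle $g_{1}(\alpha)$ of the third basic move, so $fA_{N,1}f\cong F^{g_{1}(\alpha)}K$ with $u_{h}\otimes e_{1,1}$ homogeneous of degree $g_{1}^{-1}hg_{1}$. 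Next put $E_{1,1}=f$, and for $j\ge 2$ set $E_{1,j}=u_{h_{j}}\otimes e_{1,j}$, $E_{j,1}=\alpha(h_{j},h_{j}^{-1})^{-1}\,u_{h_{j}^{-1}}\otimes e_{j,1}$, and $E_{i,j}=E_{i,1}E_{1,j}$ in general; using $e_{1,j}e_{k,1}=\delta_{jk}e_{1,1}$ and $u_{e}=1$ one checks $E_{i,j}E_{k,l}=\delta_{jk}E_{i,l}$, so the $E_{i,j}$ span a copy of $M_{k}(F)$ with $E_{i,j}$ homogeneous of degree $g_{i}^{-1}h_{i}^{-1}h_{j}g_{j}=n_{i}^{-1}n_{j}$. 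Finally each basis element $u_{h}\otimes e_{i,j}$ of $A_{N,1}$ equals a scalar multiple of $E_{i,1}(u_{h'}\otimes e_{1,1})E_{1,j}$ with $h'=h_{i}hh_{j}^{-1}$, and $g_{1}^{-1}h'g_{1}=n_{i}(g_{i}^{-1}hg_{j})n_{j}^{-1}\in N$, so $u_{h'}\otimes e_{1,1}$ lies in the corner; hence $u'_{x}\otimes e_{i,j}\mapsto$ (the corresponding element of $A_{N,1}$) defines a $G$-graded algebra isomorphism $F^{g_{1}(\alpha)}K\otimes M_{k}(F)\to A_{N,1}$, the degree $n_{i}^{-1}xn_{j}$ on the left matching $g_{i}^{-1}hg_{j}$ on the right. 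This exhibits the presentation $P_{A_{N,1}}=(K,g_{1}(\alpha),(n_{1},\dots,n_{k}))$, and since the source algebra is of the form in Theorem \ref{Bahturin-Sehgal-Zaicev} it is $G$-simple, hence so is $A_{N,1}$.

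The step I expect to cost the most care is the graded part of (3): arranging the Morita-type identification $A_{N,1}\cong M_{k}(fA_{N,1}f)$ so that it respects the $G$-grading and produces the elementary tuple $(n_{1},\dots,n_{k})$ \emph{for these particular coset representatives} (not merely a tuple in the correct $Hn_{j}$-cosets), while simultaneously keeping the cocycle bookkeeping honest so that the corner is $F^{g_{1}(\alpha)}K$ on the nose up to the allowed coboundary. Everything else is the kind of routine coset-and-degree computation the authors summarize by calling the statement straightforward.
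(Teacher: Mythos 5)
Your proof is correct and is exactly the verification the paper dismisses as ``straightforward'': the paper's own proof consists only of the conjugation computation for part~(1), which is the one piece you wave off, while you supply the coset argument for~(2) and the Morita-type identification $A_{N,1}\cong F^{g_1(\alpha)}K\otimes M_k(F)$ for~(3) in full. The construction of the homogeneous matrix units $E_{i,j}$ of weight $n_i^{-1}n_j$, the identification of the corner $fA_{N,1}f$ with $F^{g_1(\alpha)}K$, and the degree check $g_i^{-1}hg_j=n_i^{-1}(g_1^{-1}h'g_1)n_j$ all check out, and they are the content the authors left implicit.
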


\begin{remark} \label{blocks and pages}

Based on the presentation of the $N$-simple algebra above, we see
that the appearance of an $N$-simple component constitutes of a
diagonal block of the $r \times r$-matrix algebra.  We will refer to the number $d_i$ as the number of
\textit{pages} in that component. So each $N$-simple component sits on the diagonal with a
certain matrix size and a certain number of pages.

\end{remark}

\section{Proofs}

Our aim is to show that algebras $A$ and $B$ (finite dimensional
and $G$-simple) with nonequivalent presentations $P_{A}$ and
$P_{B}$ have different $T$-ideals of $G$-graded identities and
hence are $G$-graded nonisomorphic. This will imply

\begin{enumerate}

\item

$G$-graded (finite dimensional) $G$-simple algebras $A$ and $B$ are $G$-graded
isomorphic if and only if any two presentations $P_{A}$ and
$P_{B}$ are equivalent.

\item

$G$-graded (finite dimensional) $G$-simple algebras are characterized (up to $G$-graded
isomorphism) by their T-ideal of $G$-graded identities.

\end{enumerate}

\begin{remark} In section 3 we will give a proof of statement (1) that does not depend on identities. \end{remark}

Generally speaking, we proceed step by step where in each step we
show that if $A$ and $B$ satisfy the same $G$-graded identities,  then the presentations $P_{A}$ and $P_{B}$ must coincide on
certain ``invariants/parameters'' up to applications of basic
moves.

Let us start by exhibiting a list of such invariants of a
presentation

$$P_{A}=(H_{A}, \alpha, (p_1,\ldots,p_r))$$
of an algebra $A$.

\begin{enumerate}

\item

The dimensions of the homogeneous components $A_g$, for all $g\in G$ (and so, in particular, the dimension of $A$).

\item

The multiplicities of right $H$-coset representatives in the $r$-tuple $(p_1,...,p_r)$.

\item

The order of $H$.

\item

The group $H$ up to conjugation.

\item

The group $H.$

\bigskip

Based on (4), for the rest of the invariants we will assume the subgroup $H$ is determined.
The next sequence of invariants is determined by the $r$-tuple
$T=(p_1,...,p_r)$. We decompose $T$ into
subtuples where each subtuple
consists of all elements in $(p_1,...,p_r)$ lying in the same right coset $N(H)g$ of the normalizer of $H$ in $G$.

Let us denote the full tuple by $T$ and the subtuples by
$$T_{1}e, T_{2}g_{2},...,T_{k}g_{k}.$$

Each $T_{i}$ consists of  representatives $\sigma_{i,j}$ of $H$ in $N(H)$ with multiplicity $d_{i,j}$.

\item

The vector of multiplicities of representatives in each $T_{i}$.

\item

The coset representatives $\{g_1=e,g_2,\dots, g_k\}$ of $N(H)$ in $G$ that appear in the tuples, with multiplicities.

\item

The  elements of $T$ up to left multiplication by an element of
$N(H)$. Note that by the basic moves this determines the
presentation up to the 2-cocycle on $H$.

\bigskip

For the rest of the invariants we will assume the subgroup $H$ and the tuple $T$ are determined.

\bigskip

\item

The $2$-cocycle on $H$ up to conjugation by an
element of $N(H)$.

\bigskip

For each element $t_{i,j} \in T_{i}$ we consider the cocycle on
$H$ obtained by conjugation of $\alpha$ by $t_{i,j}^{-1}$ (note that
conjugating with $t_{i,j}g_{i}$ gives a cocycle on $H^{g_{i}^{-1}}$).
Then each $T_{i}$ determines a set of cocycles (on $H$).

\item

The set of cocycles (with multiplicities!) as determined by the
elements of $T_{i}$.

\bigskip

\noindent And then finally
\item

The presentation $P_{A}$ of $A$.

\end{enumerate}

\bigskip

We will refer to this list of steps as the outline of the proof.

Let $A$ and $B$ be $G$-graded algebras, finite dimensional $G$-simple with
presentations

$$P_{A}=(H_{A}, \alpha, (p_1,\ldots,p_r))$$ and
$$P_{B}=(H_{B}, \beta, (q_1,\ldots,q_s)).$$

Suppose $A$ and $B$ satisfy the same $G$-graded identities. Our
task will be to add (in each step) an invariant from the
list above on which the presentations $P_{A}$ and $P_{B}$ must coincide
(up to basic moves). The basic strategy  is to establish a suitable
connection between the invariants described above and the
structure of some extremal $G$-graded nonidentities of $A$.
But more than that. The polynomials we construct will establish a
strong connection between the  invariants and the
structure of \textit{any} nonzero evaluation of them (with a suitable basis).

\begin{remark}
Given a presentation $P_{A}$ of an algebra $A$, it is well known
that in order to test whether a $G$-graded multilinear polynomial
is an identity of $A$ it is sufficient to consider evaluations on
any $G$-graded basis of $A$ and so, from now on,  {\it we always
choose the basis consisting of the elements}  $u_{h}\otimes
e_{i,j}$, for all $h\in H$ and all $i,j$.   We will refer to this
basis as the standard basis for $A$ (Of course it really depends
on the presentation of $A$).   This will play a key role in the
proof since the \textit{connection} we make via nonzero
evaluations between the structure of $A$ and structure of the
polynomials will be based precisely on that particular $G$-graded
basis of $A$. In particular, all subspaces we consider will be
linear spans of subsets of that basis.
\end{remark}

We want to be more precise about what we mean by polynomials that
establish a strong connection between their nonzero evaluations
and the $G$-graded structure of $A$.   Let $V = \oplus_{g}V_{g}
\subseteq \oplus_{g}A_{g}$ be a $G$-graded subspace of $A$. Let
$d_g= \dim_{F}(A_g)$ and $\delta_{g}=\dim_{F}(V_g)$, $g \in G$. We
say that a multilinear $G$-graded polynomial $p$
\textit{allocates} the $G$-graded subspace $V$ of $A$ if the
following hold:

\begin{enumerate}

\item

$p=p(Z_{G})$ is obtained from a single multilinear monomial
$Z_{G}$ by homogeneous multialternation. This means that we choose
disjoint sets of homogeneous variables in $Z_{G}$(each set
constitute of elements of the \textit{same} homogeneous degree in
$G$) and we alternate the elements of each set successively.

\item

For every $g \in G$ with $V_{g}\neq 0$, we have a subset $T_{g}$
of $g$-variables in $Z_{G}$ of cardinality $d_{g}$, and a subset
$S_{g}$ of $T_{g}$ of cardinality $\delta_{g}$.

\item

The set
$T_{g}$ is alternating on $p(Z_{G})$, for every $g$ with $V_{g}\neq 0$.

\item

$p(Z_{G})$ is a $G$-graded nonidentity of $A$.

\item

If $\phi$ is \textit{any} nonzero evaluation of $p(Z_{G})$ on $A$
(with elements of the form $u_{h}\otimes e_{i,j}$!), then all
monomials but one vanish and for the unique monomial of $p(Z_{G})$
which does not vanish, say $Z_{G}$, the elements of the set
$S_{g}$ assume precisely all basis elements of $V_{g}$.

\end{enumerate}

Roughly speaking we construct alternating polynomials which are
not only nonidentities of $A$, but also have the property that by
means of  \textit{any} nonvanishing evaluation we are able to
allocate the elements of $V_{g}$, $g \in G$ to the variables in
$S_{g}$ (in the sense of (5) above). In this case we will also say
that the polynomial $p$ allocates the elements of $V_{g}$. The
upshot of this is that since $A$ and $B$ satisfy the same
$G$-graded identities, we will be able to allocate homogeneous
basis elements of $B$ determined by the presentation $P_{B}$.

In what follows we will show how to construct such polynomials for
certain $G$-graded subspaces $V$ of $A$ which correspond to the
invariants mentioned above.
 In order to construct the polynomials (roughly speaking) we
proceed as follows. We identify in the algebra $A$ (say) the
spaces $(V_{g})$ as well as the full $g$-component of $A$ for
any $g$ which appears as a homogeneous degree in the $V_{g}$'s
(no damage if we add a homogeneous degree $g$ for which
$V_g=0$). We write a \textit{nonzero} monomial with the basis
elements $u_{h}\otimes e_{i,j}$ where we pay special
attention to the spaces $V_{g}$'s.

For each basis element $u_{h}\otimes e_{i,j}$ which is to be part
of an alternating set we insert on its left the idempotent $1
\otimes e_{i,i}$ and on its right the idempotent $1 \otimes
e_{j,j}$. We refer to these idempotents as \underbar{frames}. Next
we consider the homogeneous degrees of the basis elements and we
construct a (long!) multilinear monomial, denoted by $Z_{G}$, with
homogeneous variables whose homogeneous degrees are as prescribed
by the just constructed monomial in $A$. Finally we alternate the
homogeneous sets of cardinality equal to the full dimension of the
$g$-homogeneous component in $A$.

We start with step (1),  the dimensions of the homogeneous
components. It is well known that there is a nonzero product of
the form
$$e_{1,1}\times e_{1,2}\times \cdots \times e_{i,1}=e_{1,1},$$
of all elementary matrices $e_{i,j}$, $1 \leq i,j \leq r$.
Clearly, for every $h\in H_{A}$, the product of the monomial
$\Sigma_{h} = u_{h} \otimes e_{1,1} \times u_{h} \otimes e_{1,2}
\times \cdots \times u_{h} \otimes e_{i,1}$ is nonzero and is of
the form $\lambda_{h}u_{h^{r^{2}}} \otimes e_{1,1}$, where the
scalar $\lambda_{h} \in F^{*}$ depends on the $2$-cocycle $\alpha$
on $H$. Clearly, the product $\Pi_{h} \Sigma_{h}$ of the monomials
$\Sigma_{h}$'s yields a nonzero product of the form
$\lambda_{H}u_{h_{0}} \otimes e_{1,1}$ for some $\lambda_{H} \in
F^{*}$ and some $h_{0} \in H$. Let us denote the entire product by
$\Sigma_{H}$. We refer to its elements as designated elements. Now
we insert frame elements of the form $1 \otimes e_{i,i}$ on the
left and and on the right of any basis element in $\Sigma_{H}$ so
that the entire product $\tilde{\Sigma}_{H}$ is nonzero. The key
property that we need here is that the pairs of indices $(i,j)$
and $(k,s)$, of any two different basis elements $u_{h} \otimes
e_{i,j}$ and $u_{h^{'}} \otimes e_{k,s}$ having the same
homogeneous degree, must be different. Consequently, if we permute
designated elements of $\tilde{\Sigma}_{H}$, of the same
homogeneous degree, we obtain zero.

Consider the homogeneous degree of all basis elements which appear
in $\tilde{\Sigma}_{H}$ and produce a (long) multilinear monomial
$Z_{G}$ whose elements are homogeneous of degrees as prescribed by
the elements of $\tilde{\Sigma}_{H}$. We denote variables which
correspond to designated basis elements by $z_{i,g}$'s and refer
to them as designated variables. Variables which correspond to
frame elements will be denoted by $y_{j,e}$'s. Note that by
construction, the number of designated variables of degree $g$
coincides with the dimension of $A_{g}$, for every $g \in G$. Now,
for every $g \in G$, we alternate the designated variables of
degree $g$ in $Z_{G}$ and denote the polynomial obtained by $p$.
By construction $p$ is a $G$-graded nonidentity of $A$ and so by
assumption it is also a $G$-graded nonidentity of $B$. But by the
alternation property of $p$ we have that $\dim(A_{g}) \leq
\dim(B_{g})$, for every $g \in G$, and so we are done by symmetry.
This completes the proof of step (1).

We proceed to step (2). Consider the $e$-component of
$A$. By Proposition \ref{structure of N-graded subalgebra} and Remark \ref{blocks and pages},
it is isomorphic to the direct sum of simple
algebras which can be realized in blocks along the diagonal. By
permuting the elements of the $r$-tuple (which provides the elementary grading) we can order the
$e$-blocks in decreasing order. Construct a monomial $Z_{G}$
with segments which pass through each one of the $e$-blocks,
bridged by an element (necessarily) outside the $e$-component. We
insert frames of idempotents around the elements of the $e$-blocks. The
prescribed sets $V$ here are determined as follows. For the
maximal size (say $d_1$) of $e$-blocks, we have $r_1$ blocks, for
the second size $(d_2)$ we have $r_2$ blocks and so on. So we have
$r_1$ $e$-spaces of the largest dimension $(d_1)^{2}$, and so on.
We produce the alternating polynomial as above.

\begin{proposition}

The polynomial above allocates the $e$-blocks, where the
$e$-blocks of the same dimension are determined up to permutation.
\end{proposition}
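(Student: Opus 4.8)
The plan is to verify, clause by clause, the definition of ``allocates'' for the polynomial $p$ constructed in step~(2), recording along the way the block structure asserted by the proposition. First fix notation for the construction. By Proposition~\ref{structure of N-graded subalgebra} and Remark~\ref{blocks and pages} applied with $N=\{e\}$, the subalgebra $A_e$ is a direct sum of full matrix blocks $B_1,\dots,B_t$ supported on pairwise disjoint sets of diagonal slots $I_1,\dots,I_t\subseteq\{1,\dots,r\}$, one for each right coset $Hp_i$, each with a single page; thus $B_\ell\cong M_{m_\ell}(F)$ with trivial grading, $m_\ell=|I_\ell|$, and the slot $(a,b)\in I_\ell\times I_\ell$ carries the unique standard basis vector $u_{p_ap_b^{-1}}\otimes e_{a,b}$, which has degree $e$. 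We list the blocks so that $m_1\ge\cdots\ge m_t$. For each $\ell$ fix a \emph{snake} $w_\ell$: a word using each of the $m_\ell^2$ basis vectors of $B_\ell$ exactly once whose product in $A$ is a nonzero scalar multiple of a single standard basis vector (the graded analogue of $e_{1,1}\times e_{1,2}\times\cdots\times e_{i,1}$ from step~(1)). The monomial is $w_1,\dots,w_t$ concatenated, with a \emph{bridge} basis vector inserted between $w_\ell$ and $w_{\ell+1}$ whose degree links the last slot of $w_\ell$ to the first slot of $w_{\ell+1}$; since consecutive blocks have disjoint slot sets this degree is $\ne e$. We insert the frames $1\otimes e_{a,a}$ on both sides of every snake letter, pass to fresh homogeneous variables of matching degree ($z$'s for snake letters, $y$'s for frames, one per bridge), and for each $g$ alternate the $g$-designated variables. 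As all snake letters have degree $e$, this produces a single alternating set $T_e=S_e$ with $|S_e|=\dim_FA_e$, partitioned by position as $S_e=S_e^{(1)}\sqcup\cdots\sqcup S_e^{(t)}$, with $|S_e^{(\ell)}|=m_\ell^2=\dim_FB_\ell$. We take $V$ with $V_e=A_e$ (so $S_e=T_e$).

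Clauses~(1)--(3) are then immediate: (1) is the construction, and for (2),(3) we take $T_e=S_e$, the $e$-designated variables being exactly $\dim_FA_e$ in number and alternated. For~(4): on the \emph{intended} evaluation --- each $z$ and each bridge variable to its basis vector, each $y$ to its idempotent --- the product is a nonzero product of basis vectors, and among the alternation terms only the identity term survives there, because (as in step~(1)) distinct designated slots carry distinct frame-pairs $(a,b)$, so transposing two designated letters breaks a frame match and kills the monomial. Hence $p$ is a $G$-graded nonidentity of $A$.

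The substance is clause~(5) together with the block refinement, and this is where the work lies. Given any nonzero evaluation $\phi$ on the standard basis, note first that the designated $e$-variables, being $\dim_FA_e$ in number and alternated, take pairwise distinct values in $A_e$, hence their $\phi$-values are exactly the standard basis of $A_e$, each once. Now take an alternation term not vanishing on $\phi$ and read its monomial from the left, tracking the row index of the running partial product. The leftmost letter of a snake-region is a frame variable whose $\phi$-value has degree $e$ and so lies in one block; composability, together with the fact that every letter of that region (frames and permuted designated letters alike) has degree $e$, forces the whole region to stay in that block --- call it $B_{\rho(\ell)}$ for region $\ell$ --- while the bridges, of fixed degree $\ne e$, are what carry the product from one region to the next. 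Thus all $m_\ell^2$ variables of $S_e^{(\ell)}$ receive values in $B_{\rho(\ell)}$, so for each block $B_j$ we get $\sum_{\rho(\ell)=j}m_\ell^2=m_j^2$; peeling off from the largest block down (the largest block must be hit by the first region, and by that region alone since $m_1$ is maximal) shows $\rho$ is a bijection of $\{1,\dots,t\}$ preserving block dimensions, and that the $S_e^{(\ell)}$-values exhaust the basis of $B_{\rho(\ell)}$. Finally, the frame values being fixed by $\phi$ pin the row and column of every designated slot, hence pin the unique basis vector it must carry, which forces the nonvanishing alternation term to be unique and all other terms to vanish on $\phi$. This is clause~(5), and it says $p$ allocates the $e$-blocks, with blocks of equal dimension permuted according to $\rho$.

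The main obstacle is the rigidity argument of the previous paragraph: that a nonzero evaluation cannot \emph{split} a snake-region across two blocks, and cannot make two alternation terms nonvanish simultaneously. Both follow from one mechanism --- the frames pin the row of each designated letter, snake-composability and the non-$e$ bridges pin the row-to-row transitions, and the alternation together with the count $|S_e|=\dim_FA_e$ forbids a repeated value and forces each block to be exhausted --- which rigidly drives the whole monomial, region by region, into a single block of the prescribed size. The remaining points (existence of the snakes $w_\ell$, the choice of bridge degrees, nonvanishing of the intended evaluation) are routine.
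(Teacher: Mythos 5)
Your proof is correct and follows essentially the same approach as the paper: build the monomial from framed products inside each $e$-block, bridged by homogeneous elements of degree $\neq e$, and then argue that in any nonzero evaluation the alternation forces the designated variables to exhaust the standard basis of $A_e$ exactly once, that each segment's values must lie in a single $e$-block (since passage between blocks requires degree $\neq e$), and that counting block and segment sizes yields a size-preserving bijection. Your closing observation that the frames pin the row/column indices and hence force a unique surviving alternation term is the same mechanism the paper invokes when it says that under any nontrivial alternation ``elements of the $e$-blocks will meet the wrong idempotent frames.''
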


\begin{proof}

First note that the polynomial $p$ is a nonidentity of $A$. To
see this let us show that the evaluation (which determined the
monomial $Z_{G}$) is indeed a nonzero evaluation. Clearly the
monomial $Z_{G}$ does not vanish by construction. On the other
hand in any nontrivial alternation, elements of the $e$-blocks
will meet the wrong idempotent frames and so we get zero.

Next let us show that for any nonzero evaluation of the
polynomial (on the standard basis) we have
that all monomials but one vanish and for the one that does not
vanish, the evaluation allocates the $e$-blocks as prescribed.  Indeed, we note first that by
the alternation property we are forced to evaluate the full
$e$-alternating set by a full basis of the $e$-component (for
otherwise we get zero) so taking a basis of $e$-elements of the
form $u_{h}\otimes e_{i,j}$ we are forced to use all of them and
each exactly once.

Next we analyze the evaluation of any monomial whose value is
nonzero. Elements of the $e$-component that are substituted for
variables of the same segment must belong to the same block for
otherwise we obtain zero: Indeed, segments consist only of
$e$-variables and basis elements of different blocks can be
bridged only by (homogeneous) elements of degree $\neq e$. In
other words variables of any segment must be evaluated only by
elements of the same $e$-block. Consider a segment of largest
size. Since it must be evaluated by elements of one single block,
it must exhaust one of the blocks of size $d_{1}^{2}$. Proceeding
to the next segment we see that we must substitute elements from
the $e$-block of next largest (perhaps the same) size. Continuing
in this way we obtain the desired allocation property.

\end{proof}

Having constructed the polynomial $p$, we would like to see what can
be deduced from the fact that $p$ is also a nonidentity of the
$G$-graded algebra $B$.
Without loss of generality let us assume the the configuration of
the multiplicities (i.e. the sizes of the $e$-blocks) for $A$ is
larger than for $B$ (with the lexicographic order). It follows
that in the largest $e$-segment we must put a full $e$-block and
so we must have an $e$-block of the corresponding size in $B$.  Continuing
in this way  we see that the multiplicities in $B$
must be the same as in $A$ and so we have step (2).

For step (3),   note that because the size of the matrix part in
$P_A$ (resp. $P_{B}$) is the sum of the sizes of the $e$-blocks of
$P_{A}$ (resp. $P_{B}$),  the size of the matrix part of $P_A$ and
$P_B$ must be the same. But we have seen that  $A$ and $B$ have
the same dimension.   It follows that the subgroups $H_{A}$ and
$H_{B}$ have the same order.

Before we proceed to the next step  we want to return to the proof
of step (2) above and present here a proof which is basically the
same but with more precise notation (in contrast to the
``algorithmic'' style presented above). We find this second
presentation more cumbersome and so we do it (as an illustration)
only for this step.

Let $P_{A}=(H, \alpha, (p_1,\ldots,p_r))$ be the given
presentation of the algebra $A$. Applying basic moves we know that
elements $p_{i}$'s may be replaced by right $H$-cosets
representatives and so we write the $r$-tuple $(p_1,\ldots,p_r)$
as
$$(g_{(1,i_1)},g_{(2,i_1)},\cdots,g_{(d_1,i_1)}, g_{(1,i_2)},g_{(2,i_2)},\cdots,g_{(d_2,i_2)}, \cdots, g_{(1,i_m)},g_{(2,i_m)},\cdots,g_{(d_m,i_m)})
$$
where $g_{(k,i_s)}= g_{(l,i_s)}$ for all $s$,  $1\leq s\leq m$ and all $k$ and $l$ in $\{1,2,\dots , d_s\}$, and
$Hg_{(k,i_s)}\neq Hg_{(l,i_t)}$ for $s \neq t$. Clearly, $d_1 +
d_2 + \ldots + d_m = r.$

With this notation, the $e$-component is spanned by the basis
elements $u_{e} \otimes e_{i,j}$ where $d_1 + d_2 + \ldots +
d_{k-1}+1 \leq i,j \leq  d_1 + d_2 + \ldots + d_{k}$, $1\leq k
\leq m$. Furthermore, the $e$-component is decomposed into the
direct sum of $m$ simple algebras $A_i$, which are clearly
isomorphic to the matrix algebras $M_{d_i}(F)$.

It is well known that for each one of the simple algebras $A_{k}$
(of degree $d_k$) there is a nonzero product
$\overrightarrow{E}_{k}$ of precisely all basis elements, starting
with $1 \otimes e_{t,t}$ and ending with $1 \otimes
e_{s,t}$ where $t = d_1 + d_2 + \ldots + d_{k-1}+1$ and $s = d_1 +
d_2 + \ldots + d_{k}$. Next we expand each monomial
$\overrightarrow{E}_{k}$ by bordering every basis element $1
\otimes e_{s,t}$ which appears in it by idempotents $1\otimes
e_{s,s}$ and $1 \otimes e_{t,t}$ from left and right
respectively. We denote the monomial obtained by
$\overrightarrow{E}_{k,fr}$ (``fr'' stands for framed). We view
the basis elements $1\otimes e_{s,t}$ (of
$\overrightarrow{E}_{k}$) as ``designated'' elements (which are
about to alternate) and the idempotents $1\otimes e_{t,t}$ as
``frame'' elements. The product of basis elements
$\overrightarrow{E}_{k,fr}$ consists of designated elements as
well as frames. Note that all basis elements in
$\overrightarrow{E}_{k,fr}$ are homogeneous of degree $e$.

\begin{remark}

Note that in the nonzero product above of the basis elements we do
not insist (although it is possible here) that each basis element
appears precisely once but rather that it appears at least once.
In case we have repetitions we may include the additional basis
elements as part of the frame.
\end{remark}

Now consider basis elements which bridge the different blocks. For
instance the elements $a_{k,k+1}=1 \otimes e_{i,j}$, $(i,j)=
(d_1 + d_2 + \ldots + d_{k-1}+1, d_1 + d_2 + \ldots + d_{k})$,
$k=1,\ldots, m-1$, bridge the $k$-th and $k$-th $+1$ block
respectively. From the structure of the $r$-tuple we see that the
homogeneous degree of $a_{k,k+1}$ is $\neq e$ (say $g_{\pi_{k}})$.
We obtain that the product of basis elements
$T_{1}a_{1,2}T_{2}a_{2,3}\cdots a_{m-1,m}T_{m}$ is nonzero (in
fact the product is $1\otimes e_{1,d_1 + d_2 + \ldots +
d_{m-1}+1}$). We see that any non trivial permutation on the
designated basis elements (and leaving the other elements fixed)
gives a zero product.

Now we create the multilinear monomial $Z_{G}$. The designated
basis elements will be replaced by ``designated variables''
$z_{i,e}$ whereas the rest of the basis elements (frames and
bridges) will be denoted by $y_{j,g}$, where $g$ is the
corresponding homogeneous degree. To sum up, we have the
following. From each product of basis elements
$\overrightarrow{E}_{k,fr}$ we construct a multilinear monomial
$\overrightarrow{T}_{k,fr}$ of $e$-variables (designated variables
and frames). Then the monomial $Z_{G}$ is given by the product

$$
\overrightarrow{T}_{1,fr} y_{1} \overrightarrow{T}_{2,fr} y_{2}
\cdots y_{m-1} \overrightarrow{T}_{m,fr},
$$
where $y_i$ has weight $g_{\pi_i}$.   Finally, the polynomial $p(Z_{G})$ is obtained by alternating the
variables $z_{i,e}$ of $Z_{G}$. Note that the polynomial has
precisely $dim_{F}(A_{e})$ variables $z_{i,e}$.

Now we consider the degrees of the above $e$-blocks. Assume the
$e$-blocks are ordered with degrees in decreasing order, so $d_1
\geq d_2 \geq \ldots \geq d_m$. Let $\chi_{1}$ be the number of
blocks of degree $d_1$, $\chi_{2}$ the number of blocks of degree
$d_{\chi_{1}+1}$, and finally $\chi_{\nu}$ the number of blocks of
lowest degree.

In terms of the terminology above we have vector spaces
$V_{1,1,e}, \ldots, V_{1, \chi_1,e}$ which are the first
$\chi_{1}$ diagonal blocks, and are of dimension $d^{2}_{1}$ ,
$V_{2,1,e}, \ldots, V_{2, \chi_2,e}$ are the next $\chi_{2}$
diagonal blocks, and are of dimension $d^{2}_{2}$ and so on.

We claim that the polynomial $p(Z_{G})$ satisfies the allocation
property for the spaces $V_{i,j,e}$. Clearly, by construction,
$p(Z_{G})$ is a multilinear $G$-graded nonidentity of $A$.

Next we need to see that any nonzero evaluation with basis
elements allocates the vector spaces $V_{i,j,e}$ up to a
permutation of the second index. By construction, the polynomial
$p(Z_{G})$ alternates on the set of designated variables (of
degree $e$) whose cardinality equals the dimension of $A_{e}$ over
$F$. Consequently, in any nonzero evaluation, the designated
variables must assume precisely elements which form a basis of
$A_{e}$ and so, choosing (as we may) a basis of $A_{e}$ of the
form $ 1 \otimes e_{i,j}$, the designated variables must assume
each of these elements exactly once. Next we show that (in a
nonzero evaluation) each set of designated variables in
$\overrightarrow{T}_{k,fr}$ must assume precisely all basis
elements of a unique $e$-block. Indeed, basis elements of
different $e$-blocks cannot be bridged by $e$-homogeneous elements
and so designated variables $\overrightarrow{T}_{k,fr}$ must get
values from a unique $e$-block. But the cardinalities of the sets
of designated variables of $\overrightarrow{T}_{k,fr}$ coincide
with the dimensions of the different $e$-blocks and so the result
follows from the following obvious lemma.

\begin{lemma}

Let $\Omega_{A}$ and $\Omega_{\bar{A}}$ be finite collections of
finite sets $A_1, \ldots, A_n$ and $\bar{A}_1, \ldots, \bar{A}_n$
respectively. Assume the sets $A_{i}$, $i=1,\ldots,n$ are pairwise
disjoints (likewise for the $\bar{A}_{i}$'s).

Suppose the cardinality of $A_{i}$ and $\bar{A}_{i}$ coincide for
$i=1,\ldots,n$. Let $\mathcal{A}$ and $\mathcal{\bar{A}}$ be the
union of the $A_{i}$'s and the $\bar{A}_{i}$'s respectively.
Suppose

$$
\phi: \mathcal{A} \rightarrow \mathcal{\bar{A}}
$$
is a bijection such that any two elements of different $A_{k}$'s
(say $a_{f} \in A_{f}$ and $a_{h} \in A_{h}$, $f \neq h$) are
mapped to different $\bar{A}_{i}$.

Then there is a permutation $\pi \in S_{n}$ such that the map
$\phi$ establishes a bijection of $A_{i}$ with $\bar{A}_{\pi{i}}$
for $i=1,\ldots,n$. Furthermore, if (to begin with) the sets $A$'s
and $\bar{A}$'s are ordered in decreasing order, then the
permutation $\pi$ permutes only sets of the same order.

\end{lemma}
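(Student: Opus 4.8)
The plan is to pull the target partition back through $\phi$, observe that the resulting partition of the source must \emph{refine} the given one, and then note that a refinement with the same number of (nonempty) blocks is forced to equal the original partition.

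The first step is to set $B_p=\phi^{-1}(\bar A_p)$ for $p=1,\dots,n$. Since $\phi$ is a bijection and the $\bar A_p$ are pairwise disjoint with union $\bar{\mathcal{A}}$, the $B_p$ are pairwise disjoint with union $\mathcal{A}$, and $\phi$ restricts to a bijection $B_p\to\bar A_p$, so $|B_p|=|\bar A_p|$. We may assume every $A_i$ and every $\bar A_i$ is nonempty: since $|A_i|=|\bar A_i|$, the empty members occur in equal number in the two collections and can be discarded and matched arbitrarily, affecting neither the permutation nor the cardinality count.

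The second step uses the hypothesis in contrapositive form: if $\phi(a)$ and $\phi(a')$ lie in a common $\bar A_p$, then $a$ and $a'$ lie in a common $A_k$. Hence each $B_p$ is contained in a unique $A_{\sigma(p)}$; that is, the partition $\{B_1,\dots,B_n\}$ of $\mathcal{A}$ refines $\{A_1,\dots,A_n\}$. Now each $A_k$ is a disjoint union of at least one of the $B_p$'s, and since there are exactly $n$ sets in each collection this forces each $A_k$ to \emph{equal} a single $B_p$; equivalently $\sigma$ is a bijection and $B_p=A_{\sigma(p)}$. Then $\phi$ maps $A_{\sigma(p)}$ bijectively onto $\bar A_p$, so with $\pi=\sigma^{-1}\in S_n$ we get $\phi(A_i)=\bar A_{\pi(i)}$ for every $i$, which is the first assertion.

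For the ``furthermore'', from $\phi(A_i)=\bar A_{\pi(i)}$ we obtain $|A_i|=|\bar A_{\pi(i)}|$, and together with $|A_i|=|\bar A_i|$ this gives $|\bar A_i|=|\bar A_{\pi(i)}|$ for all $i$. Thus $\pi$ preserves the cardinality function $i\mapsto|\bar A_i|$; when the sets are listed in decreasing order of size this function is weakly decreasing, and a permutation preserving a weakly decreasing function can only permute indices within a stretch of equal values --- i.e.\ $\pi$ permutes only sets of the same order. There is essentially no obstacle in this argument; the one point deserving care is the counting step that upgrades ``refines'' to ``equals'' (together with the harmless reduction to nonempty sets), the rest being a direct unwinding of the definitions.
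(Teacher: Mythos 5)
Your proof is correct; the paper labels this ``the following obvious lemma'' and supplies no proof of its own, so there is nothing to compare against. Your pullback/refinement argument, including the careful reduction to nonempty blocks and the counting step that upgrades refinement to equality, is exactly the argument one would expect the authors to have in mind.
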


The rest of the argument is the same as for the first proof.

At this point we have that   the
multiplicities of the right $H_{A}$-coset representatives in the
$r$-tuple $(p_1,...,p_r)$  (of the presentation $P_{A}$) coincides
with the multiplicities of right $H_{B}$-cosets in the
corresponding tuple of $P_{B}$.   In particular the matrix size of
the presentations $P_{A}$ and $P_{B}$ coincide.  Moreover  the subgroups
$H_A$ and $H_B$ have the same order.

The next step, number $4$ of the outline, is to show that the
subgroups $H_A$ and $H_B$ are conjugate in $G$.   For this and
later steps we  introduce a polynomial that generalizes the one
above.   We may arrange the tuples of coset representatives for
$A$ and $B$ so that representatives of the same coset (of $H_A$ in
$G$ for $A$ and of $H_B$ in $G$ for $B$)  are grouped together and
so that we use the same group elements for the same coset.   We
have proved that the number of coset representatives (with
multiplicities) is the same for $A$ and $B$.   Now let $T$ be an
arbitrary subgroup of $G$.  We have seen that  $A_T$, the
$T$-component of $A$, is a sum of $T$--simple algebras that appear
in blocks in $A$.   For each block we produce a nonzero product of
the standard basis elements that lie in that block, each used
exactly once,  with the extra condition that the first part of the
product uses those standard basis elements in that block with
weight $e$.    In other words the product begins with a nonzero
product of the standard basis elements determined by  that part of
the $e$-component that lies in that block.  This part of the
$e$-component is a semisimple algebra.   For each simple component
we produce a nonzero product of the standard basis elements from
that component. We then add frames of weight $e$ between every
pair of these basis elements.   We then add frames  of weights in
$T$ but necessarily not of weight $e$ between these simple
components.    We then complete the product for the rest of the
standard basis elements in that block.  Finally we put these block
products in some order and between each pair of successive blocks
we put another  standard basis  element (necessarily of weight
outside of $T$)  so that the entire product is nonzero.   We now
form a monomial from this product.    Denote it by $Z_{T,A}$.  We
then alternate the variables of the same weight (in $T$)  that
came from the standard basis elements in each block.    Denote the
resulting polynomial  $f_{T,A}$.  We \underbar{claim} that this
polynomial is a $G$-graded nonidentity of $A$. Indeed, replacing
the monomial $Z_{T,A}$ with the original basis elements we obtain
a nonzero product. Let us show now that for any nontrivial
alternation,  some standard basis element   will be bordered by
elements which annihilate it. To see this note that two basis
elements with the same $(i,j)$ position cannot have the same
homogeneous degree. This shows that elements with equal
homogeneous degrees are bordered by basis elements of the form
$1\otimes e_{i,i}, 1\otimes e_{j,j}$ and $1\otimes
e_{i^{'},i^{'}}, 1\otimes e_{j^{'},j^{'}}$ where the pairs $(i,j)$
and $(i^{'},j^{'})$ are different. It follows easily that any
nontrivial alternation yields a zero value. This proves the claim.
Of course we can do the same thing for $B$ and we denote the
resulting polynomial  $f_{T,B}$.

\begin{proposition} \label{subgroup decomposition}

Let $T$ be a subgroup of $G$.   There is a one-to-one
correspondence between the $T$--simple components of $A_T$ and the
$T$--simple components of $B_T$  such that corresponding
components have the same dimension and same matrix size.
Moreover for corresponding components the vector of
multipliciities  of the coset representatives (from the tuple for
$A$ and the tuple for $B$) is the same.
\end{proposition}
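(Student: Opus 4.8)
The plan is to exploit the polynomial $f_{T,A}$ just constructed: it is a $G$--graded nonidentity of $A$, hence by hypothesis also of $B$, and we read off the structure of $B_T$ from an arbitrary nonzero evaluation, in the spirit of the proof of step (2) above.

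First I would fix a nonzero evaluation $\phi$ of $f_{T,A}$ on the standard basis of $B$ and record two structural facts. Fact one: $B_T$ is, as an algebra, the direct sum of its $T$--simple components, because a nonzero product of $T$--weight standard basis elements forces the whole chain of matrix indices to stay in a single $T$--related class. Fact two: applying Proposition \ref{structure of N-graded subalgebra} and Remark \ref{blocks and pages} with the subgroup $\{e\}$ inside each $T$--simple component, the $e$--part of such a component is a direct sum of full matrix algebras whose matrix sizes are exactly the multiplicities of the right--$H$--coset representatives contributed by that component, their sum being its matrix size. So to each $T$--simple component $A^{(i)}$ of $A_T$ I attach a \emph{profile}: the graded--dimension vector $t\mapsto\dim_F((A^{(i)})_t)$ together with the sorted multiset of its $e$--block matrix sizes. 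The profile records the dimension, the matrix size and the coset--representative multiplicities, so the Proposition is exactly the statement that $A_T$ and $B_T$ have the same multiset of profiles.

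Next I would analyse $\phi$. For each $t\in T$ the polynomial $f_{T,A}$ alternates the full set of designated variables of weight $t$, whose cardinality is $\dim_F(A_t)=\dim_F(B_t)$; hence in the nonzero evaluation $\phi$ these variables receive pairwise distinct $t$--weight standard basis elements and together exhaust a standard basis of $B_T$. Since the submonomial $M_i$ of $Z_{T,A}$ built from $A^{(i)}$ uses only variables of weight in $T$, Fact one forces $\phi$ to evaluate $M_i$ inside one $T$--simple component $B^{(\sigma(i))}$ of $B$; likewise each of the $e$--segments inside $M_i$ (one per $e$--block of $A^{(i)}$, bridged by elements of weight in $T$ but not $e$) is evaluated inside a single $e$--block of $B^{(\sigma(i))}$. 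Counting the designated variables inside $M_i$ then yields $\dim_F((A^{(i)})_t)\le\dim_F((B^{(\sigma(i))})_t)$ for every $t\in T$, together with the analogous inequalities between the $e$--block sizes of $A^{(i)}$ and those of $B^{(\sigma(i))}$.

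Finally I would upgrade these one--sided inequalities to the full conclusion by a lexicographic--minimality argument of the same type that concluded step (2). Order the $T$--simple components of $A$ and of $B$ by profile in decreasing lexicographic order (leading coordinate the dimension) and assume, without loss of generality, that $A$'s list dominates $B$'s. The top component $A^{(1)}$ is sent by $\sigma$ into some $B^{(j)}$ with $\dim_F(B^{(j)})\ge\dim_F(A^{(1)})$, and domination forces equality, hence $\dim_F((B^{(j)})_t)=\dim_F((A^{(1)})_t)$ for every $t\in T$; in particular the designated $e$--variables inside $M_1$ exhaust a full basis of $(B^{(j)})_e$. Because the designated $e$--variables inside $M_1$ and those inside any other $M_{i'}$ are disjoint subsets of the single global alternating set of $e$--variables, they receive disjoint sets of values under $\phi$, so no second submonomial $M_{i'}$ with $\sigma(i')=j$ can also exhaust $(B^{(j)})_e$; thus $\sigma$ is injective on the components of $A$ of maximal profile. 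Feeding the $e$--segment counts inside $M_1$ through the same domination argument one level down pins the multiset of $e$--block sizes of $B^{(j)}$ to be that of $A^{(1)}$, so $B^{(j)}$ has exactly the profile of $A^{(1)}$. Deleting $A^{(1)}$ and $B^{(j)}$ and iterating shows $\sigma$ is a bijection matching profiles, which is the assertion. The point I expect to be the main obstacle is precisely this last step: the alternation in $f_{T,A}$ is global within each single weight but does not match up the block decomposition across weights, so it produces only inequalities; one must therefore run the argument with the $A\leftrightarrow B$ symmetry and with a profile fine enough to recover the dimension, the matrix size and the coset--representative multiplicities simultaneously — the dimension alone is not enough, since a component of a given dimension can a priori carry several non--isomorphic $e$--block structures of the same total dimension.
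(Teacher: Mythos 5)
Your proposal is correct in its essentials and uses the same engine as the paper: the graded nonidentity $f_{T,A}$, a nonzero evaluation on the standard basis of $B$, the observation that each $T$-segment of $Z_{T,A}$ must land in a single $T$-simple block of $B_T$, and the fact that the global alternation forces the designated variables of each weight $t \in T$ to exhaust $B_t$, so that the assignment $\sigma$ of $A$-blocks to $B$-blocks is surjective. What you organize differently is the counting. The paper proceeds bottom-up on dimensions (the smallest $B_T$ block must fill some segment, hence is at least as large as the smallest $A_T$ block; symmetry in $A\leftrightarrow B$ gives equality; iterate) and then makes two further passes, each time comparing the one-sided inequalities against a globally known total: the equality of $\dim A_e$ and $\dim B_e$ forces the $e$-dimensions of corresponding blocks to match, and the equality of the number of simple summands of $A_e$ and of $B_e$ (from step 2) forces the numbers of $e$-blocks to match, after which the $e$-block sizes match. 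You instead package everything into a ``profile'' (graded-dimension vector plus sorted $e$-block multiset), sort by decreasing lexicographic order, WLOG let $A$'s list dominate, and peel off the top component. This buys a more self-contained local argument — you do not need to invoke the global count of $e$-simple components from the earlier steps — and it gives, slightly more than the Proposition asks for, equality of the whole graded-dimension vectors. The price is that the step you label ``the same domination argument one level down'' is the one genuinely delicate point and is not spelled out: to pin the $e$-block multiset of $B^{(\sigma(1))}$ you must show the induced map $\tau_1$ from $e$-blocks of $A^{(1)}$ onto $e$-blocks of $B^{(\sigma(1))}$ is injective, and this requires repeating the top-element lexicographic argument inside the $e$-block multisets (using that $\tau_1$ is onto, that block dimensions are perfect squares whose sum is fixed, and that the $B$-multiset is lex-dominated). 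This can be made to work, but as written it is a gap the reader has to fill; the paper sidesteps it by using the global totals instead.
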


\proof  Because $f_{T,A}$ is a nonidentity for $A$, it must be a
nonidentity for $B$, so some monomial $Z$ of $f_{T,A}$ must be
nonzero on $B$. (In fact because each of the monomials of
$f_{T,A}$ is an alternation of $Z$,   if $Z$ has some nonzero
evaluation so does every monomial, so we could assume
$Z=Z_{T,A}$).  Under the evaluation of $Z$ no two blocks of $B_T$
can be substituted into the segment coming from a single block of
$A_T$   because elements of different blocks annihilate each
other.    So consider the $T$--simple component of $B_T$ of
smallest dimension.    When we evaluate $Z$ on $B$ this block must
completely fill some segment.  In other words the dimension of
this smallest component must be at least as large as the dimension
of the smallest component of $A_T$.   Since we can do the same
argument for $f_{T,B}$ we infer that the dimension of this
smallest component is the same as the  dimension of the smallest
component of $A_T$.   Continuing with the component of next
smallest dimension and so on,  we see that we have a one-to-one
correspondence between the $T$--simple components of $A_T$ and the
$T$--simple components of $B_T$  such that corresponding
components have the same dimension.    Moreover we see that in any
nonzero evaluation of $Z$ on $B$ we must substitute elements from
a given $T$--simple component of $B_T$  into a segment coming from
a $T$--simple component of $A_T$ of the same dimension.

 Next we \underbar{claim} that under such a substitution the component of $B_T$  must involve the same number of elements of the tuple f or $B$ with the same multiplicities
as the component for $A_T$  in whose segment of $Z$ the component
of $B_T$ is placed. To see this label these corresponding
components  $U_A$  and $U_B$.   Under the nonzero evaluation of
$Z$   the elements of the $e$-component of $U_B$  must be
substituted in the first part of the segment, the part formed from
the the $e$-component of $U_A$, and must fill that part of the
segment.    In particular the dimension of the $e$-component of
$U_B$  must be greater than or equal to the dimension of the
$e$-component of $U_A$.  Because this is true for every component
of $A_T$ and we know the dimensions of $A_e$  and $B_e$ are the
same, we see that the dimension of the $e$-component of $U_A$
equals the dimension of the $e$-component of $U_B$. But in fact
more is true.   Each of these $e$-components is a semi-simple
(ungraded) algebra.  Under the evaluation we cannot substitute two
elements from different simple components of the $e$-component in
$U_B$ into a segment coming from a single simple component of the
$e$-component in $U_A$ because such elements annihilate each
other.   Therefore the number of simple components of the
$e$-component in $U_B$  must be no larger than the number of
simple components of the $e$-component in $U_A$.   But the total
number of simple components of $A_e$  is the same as the total
number of simple components of $B_e$.   Again because we have the
inequality for all components of $A_T$ we see that the number of
simple components of the $e$-component in $U_B$  must equal the
number of simple components of the $e$-component in $U_A$.
Finally since the dimension of each of the simple components of
the $e$-component of $U_B$ must be greater than or equal to the
dimension of the simple component  of the $e$-component of $U_A$
in which it is evaluated, we see that the dimensions of the
simple components of the $e$-component in $U_B$ must equal to the
dimensions of the simple components of the $e$-component in $U_A$.
(In other words the $e$-component of $U_A$ is isomorphic as an
$F$--algebra to the $e$-component of $U_B$).       But the sum of
the matrix sizes of the simple components of the $e$-component of
$U_A$  is the matrix size of $U_A$, so $U_A$ and $U_B$ have the
same matrix size.   Moreover the matrix sizes of the the simple
components of the $e$-component of $U_A$  are exactly the
multiplicities of the elements of the tuple for $A$  that appear
there, so these are the same for $U_B$. \qed

We can now complete steps 4 and 5 of the outline.  Let $H=H_A$. By
applying a basic move to the presentation for $A$ we may assume
that $e$ appears in the tuple and that it appears with the highest
multiplicity.   Call this multiplicity $d$.    In the algebra
$A_H$ there will then be an $H$-simple component of dimension
$d^2|H|$  coming from the single coset representative $e$ in the
tuple.  By the proposition there must be a simple component of
$B_H$ of the same dimension and matrix size  coming from a single
coset representative $g$ (say) of $H_B$ in $G$ that appears in the
tuple for $B$.  Because the matrix size is the same as the
multiplicity we see that $g$ has multiplicity $d$.   Hence the
dimension of the corresponding component is $d^2|H\cap
g^{-1}H_Bg|$.   So we must have $d^2|H|=d^2|H\cap g^{-1}H_Bg|$.
Hence $|H|= |H\cap g^{-1}H_Bg|$.  Because $H$ and $H_B$ have the
same cardinality it follows that $H=g^{-1}H_Bg$,  so $H_A$ and
$H_B$ are conjugate.   By applying a basic move we may assume that
$H_A=H_B$.   We will denote this common subgroup by $H$.   We also
have that  the multiplicities arising in each $H$--simple
component is the same (up to permutation of the blocks)  in $A$
and $B$.

We now proceed to steps 6 and 7.    We decompose the tuples for
$A$ and $B$ as described before step 5 of the outline. Let $g$ be
a coset representative of $N(H)$ in $G$  that appears in the tuple
for $A$.    By Proposition~\ref{subgroup decomposition} we  know
that there is a one-to-one correspondence between the
$g^{-1}Hg$--simple components of $A_{g^{-1}Hg}$ and the
$g^{-1}Hg$--simple components of $B_{g^{-1}Hg}$  such that
corresponding components have the same dimension and matrix size.
Moreover for corresponding blocks the vector of multiplicities of
the coset representatives (from the tuple for $A$ and the tuple
for $B$) is the same.   In particular because an element of
$N(H)g$ appears in the tuple we see that we have blocks coming
from a single coset representative.   As in the case where $g=e$
this implies that the same is true for $B_{g^{-1}Hg}$  and so an
element of $N(H)g$ also appears in the tuple for $B$.   In fact
again as in the case where $g=e$ we see that the number of tuple
elements for $A$ that lie in the coset $N(H)g$ is the same as for
the tuple for $B$ including multiplicities.   This proves step 6.
It also proves step 7.

\begin{remark}

Note that if $H$ is $e$ then all we have so far is that  the
multiplicities in the $r$--tuples for $A$ and $B$ are the same. In
particular $A$ and $B$ have the same matrix size.

\end{remark}

Our next goal (step 8)  is to show that the tuples of the elementary grading
in $A$ and in $B$ are obtained from one another by
multiplication on the left by a single element of $N(H)$. This will
lead to the situation where the groups $H_{A}$ and $H_{B}$ are
still the same and the tuples are the same. Then the final parameter we will  need to deal with
will be  the $2$-cocycle on the group $H$.

We  consider a (very) special case of the statement above, namely
where $H$ is $e$. We have the tuple for $A$ and based on it we can
construct the polynomial $f_{\{e\}, A}$.  Let us
recall the construction.  We consider the $e$-blocks arising from
the multiple representatives. We produce $e$-segments for each
block bridged by non-$e$-elements. We know that the monomial is a
nonidentity of $A$ and if we put frames we know that any
nontrivial permutation of the designated $e$-elements gives a zero
product of basis elements.  We construct a
monomial out of the product above and alternate the designated
variables.

We denote by $\sigma_{1},...,\sigma_{n}$ the distinct $H$-coset
representatives in the tuple for $A$ and by
$\tau_{1},...,\tau_{n}$ the distinct coset representatives in the
tuple for $B$. Note that,  because $H=\{e\}$, distinct coset
representatives just means distinct elements. Also, we remind the
reader that by previous steps, the vector of multiplicities of
$\sigma_{1},...,\sigma_{n}$ and $\tau_{1},...,\tau_{n}$ is the
same. Let $d_{1},...,d_{n}$ be the vector of multiplicities, which
we may assume are  in decreasing order.   By
Proposition~\ref{subgroup decomposition} a nonzero evaluation on
$B$ gives rise to a permutation $\pi$ on
$\{\tau_{1},...,\tau_{n}\}$  so that the segment for $\sigma_i$ is
being evaluated by the elements in the $\tau_{\pi(i)}$ block.  For
every pair $i,j\in \{1,2,\dots, n\}$ the elements that can bridge
between the $i$-th block and the $j$-th block must have weight
$\sigma_{i}^{-1}\sigma_{j}$.   It follows that the bridging
elements between the $ \pi(i)$ block and the $\pi(j)$ block have
the same weight  and so we obtain  the relations
$\sigma_{i}^{-1}\sigma_{j}=\tau^{-1}_{\pi(i)}\tau_{\pi(j)}$ for
all $i,j$.
 Rewriting these equations,  we see that for all $i,j\in
\{1,2,\dots, n\}$,
$\sigma_{j}\tau^{-1}_{\pi(j)}=\sigma_{i}\tau^{-1}_{\pi(i)}$ and so
all the elements  $\sigma_{i}\tau^{-1}_{\pi(i)}$ are the same. We
see then that for all $i\geq 1$,
$\sigma_i=(\sigma_{1}\tau^{-1}_{\pi(1)})\tau_{\pi(i)}$, and so we
have found an element $g\in G$ such that $\sigma_i=g\tau_i$ for
all $i$.    That ends the case where $H=\{e\}$.

\begin{remark}
Note that not every permutation $\pi$ is allowed. For instance, a
permutation that exchanges elements with different multiplicities
would lead to a contradiction. In other words we cannot substitute
an $e$-block of size $d_{i}$ with an $e$-block of size $d_{j}\neq
d_{i}$. It is important to note (as mentioned above) that if a
segment was determined by a block of size $d_{i}$, arising from an
element $\sigma_{i}$ (say) (with multiplicity $d_{i}$) then in any
nonzero evaluation on $B$ (or on $A$) the segment will assume
values precisely of one block arising from $\tau_{j}$ where
necessarily $d_{j}=d_{i}$. Nevertheless, for the proof, we only
need to know the existence of a permutation $\pi$ as above.
\end{remark}

In fact a similar argument will work when $H$ is normal in $G$.
Because we will use it in the general case,  when $H$ is not
necessarily normal,  we  outline the normal case here:

Applying  Proposition~\ref{subgroup decomposition}  to  $A_H$ and
$B_H$  we see that there is a permutation $\pi$ on
$\{\tau_{1},...,\tau_{n}\}$ with the requirement that for every
$i,j \in \{1,2,\dots, n\}$,  the corresponding bridging elements
must have the same weight.  The set of weights of possible
bridging elements from the $i$-th to the $j$-th block in this case
is  the set $\sigma_{i}^{-1}H\sigma_{j}$. Therefore the necessity
of having bridging elements of the same weight for passing from
the  $ \pi(i)$ block to the $\pi(j)$ means that  for every $i,j
\in \{1,2,\dots, n\}$ the intersection

$$
\sigma_{i}^{-1}H\sigma_{j}\cap \tau^{-1}_{\pi(i)}H\tau_{\pi(j)}
$$
is nonempty. But because the $\sigma$'s and $\tau$'s normalize
$H$, these two sets are in fact cosets of $H$, and so must be
equal.   It follows that there are elements $h_{j} \in H$, $j \in
\{1,2,\dots, n\}$, such that for all $j$
$$
\sigma_j=(\sigma_{1}\tau^{-1}_{\pi(1)})h_{j}\tau_{\pi(j)}
$$
To complete the argument in this case recall that by basic move
(2) we may replace any element of the tuple for $B$ by a different
representative of the $H$-coset (that is replace $\tau_{\pi(j)}$
by $h_{j}\tau_{\pi(j)}$). We see therefore that the tuple for $A$
is obtained from the tuple for $B$ by multiplying on the left by
an element from $N(H) (=G)$.

We can now consider the general case where the group $H$ is not
necessarily normal in $G$.  We decompose the tuple for $A$ into
subtuples coming from different $N(H)$-representatives in $G$.  We
will refer to these subtuples as ``big blocks". We know that the
multiplicities in each subtuple coincide. We construct a monomial
which corresponds to that configuration:  We start with the
monomials $Z_{g_i^{-1}Hg_i, A}$ constructed before,   where
$g_1=e, g_2, \dots, g_k$ are the distinct coset representatives of
$N(H)$ in $G$ appearing in the tuple for $A$ which we have shown
can be taken to be also the distinct coset representatives of
$N(H)$ in $G$ appearing in the tuple for $B$.   We then form the
product of these monomials bridging successive monomials with
variables whose weights allow a nonzero evaluation using the
standard basis elements for $A$.    Call this big monomial $Z_A$.
We then perform successive alternations of the variables of a
given weight appearing in each of the monomials $Z_{g_i^{-1}Hg_i,
A}$.   Call this polynomial $f_A$.   This is a nonidentity for $A$
and so must be a nonidentity for $B$.    So one of the monomials
of $f_A$ must be nonzero on the standard basis of $B$ and as we
saw in the proof of Proposition~\ref{subgroup decomposition} we
may assume this monomial is  $Z_A$.    In particular each of the
submonomials  $Z_{g_i^{-1}Hg_i, A}$ must have a nonzero
evaluation.  We denote by $\sigma_{i,k}$ a typical  representative
of the cosets of $H$ in $N(H)$ such that $\sigma_{i,k}g_i$ appears
in the tuple for $A$  and by $\tau_{i,m}$  a typical
representative of the cosets of $H$ in $N(H)$ such that
$\tau_{i,m}g_i$ appears in the tuple for $B$.    The nonzero
evaluation of $Z_A$  then produces a permutation  $\pi$  on the
tuple for $B$ that preserves the subtuples coming from each coset
representative $g_i$ of $N(H)$ in $G$ that takes a block
corresponding to the coset representative $\sigma_{i,k}g_i$ to the
block coming from the coset representative $\tau_{i,\pi(k)}g_i$.
As before the bridge between the $\sigma_{i,k}g_i$  block and the
$\sigma_{j,m}g_j$  block must also serve as a bridge between the
$\tau_{i,\pi(k)}g_i$  block and the $\tau_{j,\pi(m)}g_j$ block and
so the set

$$
g_i^{-1}\sigma_{i,k}^{-1}H\sigma_{j,m}g_{j}\cap g^{-1}_{i}\tau^{-1}_{i, \pi(k)}H\tau_{j,\pi(m)}g_{j}
$$
must be nonempty.    Canceling  we obtain
$$\sigma_{i,k}^{-1}H\sigma_{j,m}\cap \tau^{-1}_{i,
\pi(k)}H\tau_{j,\pi(m)} $$  is nonempty.      It follows that
there are elements $h_{j,m} \in H$, $j \in \{1,2,\dots, n\}$, such
that for all $j,m$
$$
\sigma_{j,m}=(\sigma_{1,1}\tau^{-1}_{1,\pi(1)})h_{j,m}\tau_{j,\pi(m)}
$$
To complete the argument in this case recall that by basic move
(2) we may replace any element of the tuple for $B$ by a different
representative of the $H$-coset (that is replace $\tau_{j,
\pi(m)}$ by $h_{j,m}\tau_{j,\pi(m)}$). We see therefore that the
tuple for $A$ is obtained from the tuple for $B$ by multiplying on
the left by an element from $N(H)$.

So by applying basic moves,  we may now assume  that the fine
gradings of $A$ and $B$ are determined by the same group $H$ and
the elementary grading is determined by the same $r$-tuple
$(p_{1},...,p_{r}) \in G^{(r)}$. We proceed now to show that the
cocycles $\alpha$ and $\beta$ may be assumed to be the same.

We start with the case where the grading is fine,  that is,  $A$
and $B$ are twisted group algebras. Before stating the proposition
recall (Aljadeff, Haile and Natapov\cite{AHN}) that the $T$-ideal
of $H$-graded identities of a twisted group algebra $F^{\alpha}H$
is generated as a $T$-ideal by the multilinear binomial identities of the form

$$
B(\alpha)=x_{i_1, h_1}x_{i_2, h_2} \cdots x_{i_s, h_s} - \lambda_{((h_1,...,h_s),
\pi)}x_{i_{\pi(1)}, h_{\pi(1)}}x_{i_{\pi(2)}, h_{\pi(2)}} \cdots x_{i_{\pi(s)}, h_{\pi(s)}},
$$
where

\begin{enumerate}

\item

$h_{i} \in H$, $i=1,...,s$

\item
$\pi \in Sym(s)$

\item

the products $h_{1}h_{2}\cdots h_{s}$ and $h_{\pi(1)}h_{\pi(2)}\cdots h_{\pi(s)}$ coincide in $H$

\item

$\lambda$ is a nonzero element (root of unity) $\in F$ determined
by the  $s$-tuple $h_{1},h_{2},\ldots, h_{s}$ and the permutation
$\pi$.

\end{enumerate}

\begin{remark}
In fact more is true. If we allow repetitions of the homogeneous
variables, the binomial identities obtained span the $T$-ideal of
$H$-graded identities as an $F$-vector space. However we will not
need this fact here.

\end{remark}

\begin{proposition}
Given twisted group algebras $F^{\alpha}H$ and $F^{\beta}H$, then
the cocycles are cohomologous if and only if the algebras satisfy
the same graded identities.

\end{proposition}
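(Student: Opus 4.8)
The plan is: the implication ``cohomologous $\Rightarrow$ same identities'' is the easy half, while for the converse I would use the binomial description recalled above to reduce to a cohomological statement about the ratio cocycle $\gamma:=\beta\alpha^{-1}$.

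For a word $(h_{1},\dots ,h_{s})$ in $H$ write $\alpha^{\#}(h_{1},\dots ,h_{s})=\prod_{j=2}^{s}\alpha(h_{1}\cdots h_{j-1},h_{j})$ for the scalar determined by $u_{h_{1}}\cdots u_{h_{s}}=\alpha^{\#}(h_{1},\dots ,h_{s})\,u_{h_{1}\cdots h_{s}}$ in $F^{\alpha}H$, and similarly $\beta^{\#}$ for $F^{\beta}H$. If $\alpha$ and $\beta$ are cohomologous, say $\beta(a,b)=\mu(a)\mu(b)\mu(ab)^{-1}\alpha(a,b)$ for some $\mu\colon H\to F^{*}$, then sending $u_{h}$ to $\mu(h)^{-1}$ times the corresponding generator of $F^{\beta}H$ defines an $H$-graded isomorphism $F^{\alpha}H\to F^{\beta}H$, and graded-isomorphic algebras have the same graded identities. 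Conversely assume $Id_{H}(F^{\alpha}H)=Id_{H}(F^{\beta}H)$. Fix $(h_{1},\dots ,h_{s})$ and $\pi\in\Sym(s)$ with $h_{1}\cdots h_{s}=h_{\pi(1)}\cdots h_{\pi(s)}$. Evaluating the binomial $B(\alpha)$ attached to this data on the standard basis of $F^{\alpha}H$ shows that its coefficient $\lambda_{((h_{1},\dots ,h_{s}),\pi)}$ equals $\alpha^{\#}(h_{1},\dots ,h_{s})/\alpha^{\#}(h_{\pi(1)},\dots ,h_{\pi(s)})$; since by hypothesis $B(\alpha)$ is also an identity of $F^{\beta}H$, the same evaluation on the standard basis of $F^{\beta}H$ forces $\lambda_{((h_{1},\dots ,h_{s}),\pi)}=\beta^{\#}(h_{1},\dots ,h_{s})/\beta^{\#}(h_{\pi(1)},\dots ,h_{\pi(s)})$. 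As $\alpha^{\#}$ is a product of values of $\alpha$ we have $\gamma^{\#}=\beta^{\#}/\alpha^{\#}$, so the cocycle $\gamma=\beta\alpha^{-1}\in Z^{2}(H,F^{*})$ satisfies
\[
\gamma^{\#}(h_{1},\dots ,h_{s})=\gamma^{\#}(h_{\pi(1)},\dots ,h_{\pi(s)})\qquad\text{whenever}\qquad h_{1}\cdots h_{s}=h_{\pi(1)}\cdots h_{\pi(s)} ,
\]
and the proposition is reduced to: a cocycle $\gamma\in Z^{2}(H,F^{*})$ whose iterated form $\gamma^{\#}$ is invariant under every product-preserving permutation of words is a coboundary.

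For that reduced statement I would pass to the central extension $1\to F^{*}\to E_{\gamma}\to H\to 1$ determined by $\gamma$, with set-theoretic section $g\colon H\to E_{\gamma}$; one computes $g(h_{1})\cdots g(h_{s})=\bigl(\gamma^{\#}(h_{1},\dots ,h_{s}),\,h_{1}\cdots h_{s}\bigr)$, so the displayed invariance says exactly that the product $g(h_{1})\cdots g(h_{s})$ in $E_{\gamma}$ depends only on the multiset $\{h_{1},\dots ,h_{s}\}$ and on the group element $h_{1}\cdots h_{s}$. Taking $s=2$ already gives $\gamma(a,b)=\gamma(b,a)$ for commuting $a,b$, i.e.\ $\gamma$ restricts to a symmetric, hence coboundary, cocycle on every abelian subgroup of $H$; the relations from longer words supply what is needed to glue these local trivializations together. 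The goal is to build $\mu\colon H\to F^{*}$ with $\gamma(a,b)=\mu(a)\mu(b)\mu(ab)^{-1}$ by choosing for each $h$ a representing word $w_{h}$, setting $\mu(h)=\gamma^{\#}(w_{h})^{-1}$, and using the rearrangement relations together with the cocycle identity for $\gamma$ to show the value is independent of the choice and cobounds $\gamma$ — equivalently, to show that the subgroup $\langle g(H)\rangle\le E_{\gamma}$ is a central extension of $H$ whose class dies in $H^{2}(H,F^{*})$.

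I expect this last step to be the main obstacle. The naive choice of words fails because concatenating $w_{a}$ and $w_{b}$ produces a word representing $ab$ of length $\ell(w_{a})+\ell(w_{b})$, which is \emph{not} a permutation of the chosen $w_{ab}$ — permutations preserve length whereas concatenation does not — so the cocycle identity for $\gamma$ must be brought in to connect words of different lengths with the same product. Making this precise (or, equivalently, extracting triviality of the extension class directly from the combinatorial invariance) is where the real work lies; the remaining ingredients — the easy direction, the identification of the binomial coefficient, and the reduction to a statement about $\gamma$ alone — are routine.
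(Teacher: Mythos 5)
Your reduction is fine as far as it goes, and your easy direction is fine, but you have explicitly not proved the statement you reduce to, and that statement is precisely the nontrivial content of the proposition. The reduced claim --- that a $2$-cocycle $\gamma$ whose iterated product $\gamma^{\#}$ is invariant under every product-preserving permutation of words is a coboundary --- is true, but the ``gluing'' you gesture at (defining $\mu(h)$ from a chosen word $w_h$ and verifying coherence) is exactly where you need a real theorem, and it is the theorem the paper invokes: the Hopf formula for the Schur multiplier together with the universal coefficient theorem. Concretely, write $1\to R\to\Gamma\to H\to 1$ for a free presentation of $H$ on generators $x_h$, $h\in H$. Then $M(H)\cong (R\cap[\Gamma,\Gamma])/[R,\Gamma]$ and $H^2(H,F^*)\cong\Hom(M(H),F^*)$, so $[\gamma]\neq 1$ if and only if there is $z\in R\cap[\Gamma,\Gamma]$ with $\gamma(z)\neq 1$. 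Such a $z$ is a word in the $x_h^{\pm 1}$ with trivial image in $H$ and zero exponent sum in each generator; after padding with $x_h^{n}x_h^{-n}$ and pushing across $z$ modulo $[R,\Gamma]$ one can write $z=z_1z_2^{-1}$ with both $z_i$ positive words, and the zero-exponent-sum condition then forces $z_2$ to be a permutation of $z_1$ with the same product in $H$. So a nontrivial $[\gamma]$ produces exactly a pair $(w,\pi w)$ with $\gamma^{\#}(w)\neq\gamma^{\#}(\pi w)$. This is the contrapositive of your reduced statement, and it is the step you flag as ``where the real work lies.'' Without this input your proof does not close; your observation from $s=2$ only handles abelian subgroups, and concatenation of chosen words $w_a, w_b$ need not be permutation-equivalent to $w_{ab}$ (lengths differ), precisely as you note --- reconciling different lengths \emph{is} the $[R,\Gamma]$-quotient in the Hopf formula, and you should name and use it rather than leave it as an obstacle.
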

\begin{proof}

The idea of the proof appeared already in \cite{AHN},  where we considered
the particular case where the group $H$ is of central type and the
twisted group algebra $F^{\alpha}H$ is the algebra of $k \times k$-matrices over $F$ where
$ord(H)=k^{2}$.
However, the same construction holds in general. For the reader's
convenience, let us recall here the construction.

It is well known, by the universal coefficient theorem, that the cohomology group $H^{2}(H,F^{*})$ is
naturally isomorphic to $Hom(M(H),F^{*})$ where $M(H)$ denotes the Schur
multiplier of $H$. It is also well known that $M(H)$ can be
described by means of presentations of $H$, namely the Hopf
formula. Indeed, let $\Gamma=\Gamma\langle x_{h_1},\ldots,x_{h_m}
\rangle$ be the free group on the variables $x_{h_i}$'s where
$H=\{h_1,\ldots,h_m\}$. Consider the presentation

$$
\{1\} \rightarrow R \rightarrow \Gamma \rightarrow H \rightarrow
\{1\}
$$
where the epimorphism is given by $x_{h_i} \longrightarrow h_i$.

One knows that the Schur multiplier $M(H)$ is isomorphic to

$$
R\cap [\Gamma,\Gamma]/[R,\Gamma].
$$

Given a $2$-cocycle $\alpha$ on $H$ (representing $[\alpha] \in
H^{2}(H,F^{*})$) it determines an element of $Hom(M(H),F^{*})$ as
follows: Let $[z]$ be an element in $M(H)$ where $z$ is a
representing word in $R\cap [\Gamma,\Gamma]$. For each variable
$x_{h}$ consider the element $u_{h}$ in the twisted group algebra
$F^{\alpha}H$ representing $h$.  Then the value of $\alpha$ on $z$
is the root of unity which is the product in $F^{\alpha}H$ of
the elements $u_{h}$ (which correspond to the variables $x_{h}$ of
$z$). One knows that the value $[\alpha]([z])$ depends on the
classes $[\alpha] \in H^{2}(H,F^{*})$ and $[z] \in M(H)$ and not
on their representatives. Note that by the isomorphism of $H^{2}(H,F^{*})$) with $Hom(M(H),F^{*})$
we have that for two noncohomologous
$2$-cocycles $\alpha$ and $\beta$ there is $z \in R\cap
[\Gamma,\Gamma]$ with $\alpha(z) \neq \beta(z)$. Let us show now how $H$-graded polynomial identities come into play.

Let
$$
z=x_{h_{i_1}}^{\epsilon_1}x_{h_{i_2}}^{\epsilon_2}\cdots
x_{h_{i_r}}^{\epsilon_r}
$$
where $\epsilon_i = \{\pm 1\}$. Because  $z$ is in $R$ we have that
$h^{\epsilon_1}_{i_1}h^{\epsilon_2}_{i_2}\cdots h^{\epsilon_r}_{i_r}=e$  and because $z$ lies in
$[\Gamma,\Gamma]$, we have  that the sum of the exponents $\epsilon_i$
which decorate any variable $x_{h}$ which appears in $z$, is zero.

Our task is to construct out of $z$ and the value $\alpha(z) \in
F^{*}$ an $H$-graded binomial identity of the twisted group
algebra $F^{\alpha}H$. Pick any variable $x_{h}$ in $z$ and let
$n$ be the order of $h$ (in $H$). Clearly the commutator
$[x_{h}^{n},y]$, $y \in \Gamma$, is in $[R,\Gamma]$ and so multiplying $z$ (say on the left) with
elements $x_{h}^{n}$ and $x_{h}^{-n}$, and moving them (to the right) successively along $z$ by means of the relation
$[x_{h}^{n},y]$, we obtain a
representative of $[z]$ in $M(H)$ of the form

$$
z_{1}z^{-1}_{2}
$$
where the variables in $z_{1}$ and $z_{2}$ appear only with
positive exponents.

The binomial identity which corresponds to $z$ and $\alpha(z)$ is
given by

$$
Z_{1}-\alpha(z) Z_{2}
$$
where $Z_{i}$ is the monomial in the free $H$-graded algebra whose
variables are in one to one correspondence with the variables of
$z_{i}$.
We leave the reader the task to show that indeed $ Z_{1}-\alpha(z)
Z_{2} $ is a $G$-graded identity. Clearly, from the construction
it follows that twisted group algebras $F^{\alpha}H$ and
$F^{\beta}H$ satisfy the same $G$-graded identities if and only if
the cocycles $\alpha$ and $\beta$ are cohomologous. This completes
the proof of the proposition.
\end{proof}

\begin{remark}

\begin{enumerate}

\item

The binomial identity obtained above, say for $\alpha$, may not be
multilinear. In order to obtain a multilinear binomial identity
assume $x_{h}$ appears $k$-times in each monomial $Z_{i}$,
$i=1,2$. Then replacing the variables by $k$ different variables
$x_{1,h},\dots,x_{k,h}$ in each monomial (any order!) we obtain an $H$-graded (binomial) identity
which is on variables whose homogeneous
degree is $h$. Repeating this process for every $h \in H$ gives a
multilinear (binomial) identity.

\item

It follows that any two noncohomologous cocycles can be separated
by suitable binomial identities in the sense that for any ordered
pair, $(\alpha, \beta)$ (where $(\alpha \neq \beta)$) there is a
binomial $B(\widehat{\alpha}, \beta)$ which is an identity of
$\beta$ (abuse of language) and not an identity of $\alpha$.

\item

Assume $\beta_{1},\ldots,\beta_{k}$ are cocycles on $H$ which are
different from $\alpha$ (noncohomologous to $\alpha$). Then by
the previous Proposition there is a binomial identity
$B(\widehat{\alpha}, \beta_{i})$ of $\beta_{i}$ which is a
nonidentity of $\alpha$. Then if we take the product of these
binomials (with different variables), we see that the product is a
multilinear identity of any of the $\beta_{i}$'s and not an
identity of $\alpha$.  This follows from the fact that in the twisted group algebra the product of two nonzero homogeneous elements is nonzero.

\end{enumerate}

\end{remark}

We now come to an important lemma which is due to Yaakov Karasik,
in which we extend the preceding proposition  to algebras with presentations in which the elementary grading
is trivial.

\begin{lemma}\label{Karasik}
Let $A$ and $B$ be finite dimensional $G$-simple algebras with presentations $P_{A}$
and $P_{B}$ respectively. Suppose $P_{A}$ and $P_{B}$ are given by
$F^{\alpha}H\otimes M_{r}(F)$ and $F^{\beta}H\otimes M_{r}(F)$
respectively, both with trivial elementary grading on $M_{r}(F)$.
If $\alpha$ and $\beta$ are noncohomologous,  then there is an
identity of $A$ which is a nonidentity of $B$.

\end{lemma}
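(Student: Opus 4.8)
The plan is to reduce the question to ordinary polynomial identities of $M_{r}(F)$ and then to exhibit a suitable ``cocycle--twisted standard polynomial.''  We may assume $r\ge 2$, since for $r=1$ the statement is exactly the preceding proposition.  Because the elementary grading on $M_{r}(F)$ is trivial, $A_{h}=u_{h}\otimes M_{r}(F)$ for $h\in H$ and $A_{g}=0$ otherwise; hence (normalising $\alpha$ so that $\alpha(h,e)=\alpha(e,h)=1$) a graded evaluation of a degree--$h$ variable on $A$ is of the form $u_{h}\otimes X$ with $X\in M_{r}(F)$ arbitrary, and a graded monomial $x_{h_{1}}\cdots x_{h_{s}}$ evaluates to $c_{\alpha}(h_{1},\dots,h_{s})\,u_{h_{1}\cdots h_{s}}\otimes(X_{1}\cdots X_{s})$, where $c_{\alpha}$ denotes the iterated $2$--cocycle.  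Splitting a multilinear graded polynomial $f$ into homogeneous parts, one sees that $f\in Id_{G}(A)$ if and only if, for every $g$, the ``$\alpha$--weighted ungraded projection'' $\sum_{\deg m=g}(\text{coeff of }m)\,c_{\alpha}(m)\,\overline{m}$ (strip the degrees, keep the variable order) lies in $Id(M_{r}(F))$; and $f\notin Id_{G}(B)$ as soon as the corresponding $\beta$--projection fails to lie in $Id(M_{r}(F))$ for some $g$.  Thus it suffices to produce a single homogeneous multilinear graded $p$ whose $\alpha$--projection is an identity of $M_{r}(F)$ but whose $\beta$--projection is not.

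Since $\alpha$ and $\beta$ are not cohomologous, I would first fix commuting elements $a,b\in H$ with $\alpha(a,b)\alpha(b,a)^{-1}\ne\beta(a,b)\beta(b,a)^{-1}$ --- i.e.\ pick a $2$--generated abelian subgroup of $H$ on which the discrepancy between $[\alpha]$ and $[\beta]$ is still visible.  Now take variables $x_{1}$ of degree $a$, $x_{2}$ of degree $b$, and $x_{3},\dots,x_{2r}$ of degree $e$, and set
$$ p=\sum_{\sigma\in\Sym(2r)}\frac{\sgn(\sigma)}{c_{\alpha}(\sigma)}\;x_{\sigma(1)}x_{\sigma(2)}\cdots x_{\sigma(2r)}, $$
where $c_{\alpha}(\sigma)$ is the iterated cocycle of the resulting degree sequence.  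As $a$ and $b$ commute and all remaining variables have trivial degree, every monomial of $p$ has degree $ab$, so $p$ is a homogeneous graded polynomial, and $c_{\alpha}(\sigma)$ equals $\alpha(a,b)$ or $\alpha(b,a)$ according as $x_{1}$ precedes or follows $x_{2}$.  Hence the $\alpha$--weighted projection of $p$ is precisely the standard polynomial $s_{2r}$, which by Amitsur--Levitzki lies in $Id(M_{r}(F))$; so $p\in Id_{G}(A)$.  The $\beta$--weighted projection of $p$, however, is $s_{2r}$ with the coefficient of each monomial multiplied by $c_{\beta}(\sigma)/c_{\alpha}(\sigma)$, which takes the value $\beta(a,b)/\alpha(a,b)$ on the monomials with $x_{1}$ before $x_{2}$ and $\beta(b,a)/\alpha(b,a)$ on the rest --- two \emph{distinct} values, by the choice of $a,b$.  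Writing $s_{2r}=s_{2r}^{+}+s_{2r}^{-}$ for the split according to the relative order of $x_{1}$ and $x_{2}$, this projection is $\lambda_{1}s_{2r}^{+}+\lambda_{2}s_{2r}^{-}$ with $\lambda_{1}\ne\lambda_{2}$; subtracting $\lambda_{2}s_{2r}\in Id(M_{r}(F))$ would force $s_{2r}^{+}\in Id(M_{r}(F))$, contradicting the fact that the only multilinear degree--$2r$ identities of $M_{r}(F)$ are the scalar multiples of $s_{2r}$.  Therefore $p\notin Id_{G}(B)$, as required (and by symmetry the same construction yields an identity of $B$ which is a nonidentity of $A$).

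The main obstacle is the reduction opening the second paragraph.  The cocycle weights that can occur in a \emph{graded--homogeneous} polynomial only involve permutations of the variables preserving the product of their degrees in $H$, so the ``cocycle--sensitive'' part of any such polynomial is forced to use variables whose degrees lie in an abelian subgroup of $H$; for this to interact with an honest polynomial identity of $M_{r}(F)$ one needs the $\alpha$--versus--$\beta$ discrepancy to be detectable already on such a subgroup.  That it is --- i.e.\ that the Schur multiplier $M(H)$ is detected, up to generation, on its abelian (hence $2$--generated) subgroups --- is the one nontrivial input from the cohomology of finite groups; the other key ingredient is the classical fact that the minimal--degree multilinear identities of $M_{r}(F)$ are exactly $F\cdot s_{2r}$.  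Everything else (the bookkeeping of the first paragraph, homogeneity, Amitsur--Levitzki) is routine.
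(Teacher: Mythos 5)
Your overall plan is sound and the reduction in your first paragraph is correct: because the elementary grading is trivial, a multilinear $G$-graded polynomial is an identity of $F^{\alpha}H\otimes M_{r}(F)$ precisely when each $\alpha$-weighted ungraded projection lies in $Id(M_{r}(F))$. Your second paragraph is also internally consistent: \emph{given} commuting $a,b\in H$ with $\alpha(a,b)\alpha(b,a)^{-1}\ne\beta(a,b)\beta(b,a)^{-1}$, the twisted standard polynomial you build, together with Amitsur--Levitzki and the fact that the multilinear degree-$2r$ identities of $M_{r}(F)$ form the one-dimensional space $F\cdot s_{2r}$, does separate the two algebras.

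The fatal gap is exactly the ``one nontrivial input'' you flag and then assert without proof. The claim that two noncohomologous classes in $H^{2}(H,F^{*})$ can always be distinguished by a commuting pair $(a,b)$ is equivalent to injectivity of the restriction map from $H^{2}(H,F^{*})$ to the product of $H^{2}(A,F^{*})$ over the $2$-generated abelian subgroups $A\le H$. The kernel of this map is the Bogomolov multiplier $B_{0}(H)$ (equivalently, via the Hopf formula, the quotient of $M(H)$ by the subgroup generated by commutators of preimages of commuting elements). This kernel is \emph{not} always trivial: there are finite groups, e.g.\ certain $2$-groups of order $64$ and $p$-groups of order $p^{5}$ for odd $p$, with $B_{0}(H)\ne 0$. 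For such an $H$ one can take $\beta$ trivial and $\alpha$ a nonzero class in $B_{0}(H)$; then no commuting pair sees the difference and your $p$ does not exist. Your construction is structurally forced into this corner: to make $s_{2r}$ graded-homogeneous you need the variables of non-$e$ degree to commute, so only the ``abelian part'' of $M(H)$ is visible to standard-polynomial-type constructions.

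The paper circumvents this by using, in place of the standard polynomial, the binomial identities of $F^{\beta}H$ coming from arbitrary words $z\in R\cap[\Gamma,\Gamma]$ in the Hopf-formula presentation of $M(H)$. Since $H^{2}(H,F^{*})\cong\Hom(M(H),F^{*})$, any two noncohomologous cocycles are separated by some such $z$, with no abelian-detection hypothesis needed; the corresponding binomial $B(\widehat{\alpha},\beta)$ is a \emph{pair} of monomials rather than a full alternation, so the variable degrees need not commute. This binomial is then lifted from the twisted group algebra to $F^{\beta}H\otimes M_{r}(F)$ by substituting the Regev central polynomial (on $2r^{2}$ variables) for each graded variable. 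If you want to salvage your approach you would need to replace $s_{2r}$ by a polynomial adapted to an arbitrary relator $z$, at which point you would essentially be reconstructing the paper's Regev-polynomial lift.
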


\begin{remark}

In case the group $G$ is abelian, this was proved by
Koshlukov and Zaicev \cite{KZ} using certain modifications of the standard
polynomial. However this approach (at least in its straightforward generalization) seems to fail for nonabelian
groups.

\end{remark}

\begin{proof}

As above let $B(\widehat{\alpha}, \beta)$ denote a binomial identity of $F^{\beta}H$ which is a nonidentity of $F^{\alpha}H$.
Then $B(\widehat{\alpha}, \beta)$ has the
form

$$
B(\widehat{\alpha}, \beta)=z_{h_1}z_{h_2} \cdots z_{h_s} - \lambda_{(\widehat{\alpha}, \beta,
(h_1,...,h_s), \pi)}z_{h_{\pi(1)}}z_{h_{\pi(2)}} \cdots
z_{h_{\pi(s)}}.
$$

Next, consider the Regev polynomial $p(X,Y)$ on $2r^{2}$ variables
(each of the sets $X$ and $Y$ consists of $r^{2}$ variables). It
is multilinear (of degree $2r^{2}$) and central on $M_{r}(F)$. Any
evaluation of $X$ or $Y$ on a proper subset of the $r^{2}$
elementary matrices $e_{i,j}$ yields zero whereas in case $X$ and
$Y$ assume the full set of elementary matrices the value is
central, nonzero (and hence invertible) matrix
(see~\cite{formanek}).

Now, for each variable $z_{h}$ of $B(\widehat{\alpha}, \beta)$ we
construct a Regev polynomial on $2r^{2}$ variables where we pick
one variable from $X$ (no matter which) and we determine its
homogeneous degree to be $h$. The rest of the $x$'s and all the
$y$'s in $Y$ are determined as variables of homogeneous degree
$e$. We denote the corresponding Regev polynomial by
$p_{h}(X_{r^2},Y_{r^{2}})$. Now, we consider a basis of the
algebra $F^{\alpha}H\otimes M_{k}(F)$ consisting of elements of
the form $u_{h}\otimes e_{i,j}$. Note that there are precisely
$r^{2}$ basis elements of degree $e$ and $r^{2}$ basis elements of
degree $h$. We see that if we evaluate the polynomial
$p_{h}(X_{r^2},Y_{r^{2}})$ with elements in $\{1\otimes e_{i,j},
u_{h}\otimes e_{i,j}\}_{i,j}$ the result will be zero if the
elementary matrix constituent of the basis elements is
\textit{not} the full set of $r^{2}$ matrices (either for $X$ or
for $Y$) and $u_{h}\otimes \lambda \Id$ otherwise. It follows that
if we replace every variable $z_{h}$ in $B(\widehat{\alpha},
\beta)$ by the Regev polynomial $p_{h}(X_{r^2},Y_{r^{2}})$ we
obtain a polynomial

$$
R(\widehat{\alpha},
\beta, r)
$$
which is an identity of $B$ and a nonidentity of $A$.
\end{proof}

Before we continue, recall that a big block of $M_{r}(F)$ is any
block which is determined by elements of the tuple
$\{p_1,...,p_r\}$ which belong to the same right $N(H)$-coset of
$G$. A subblock of a big block is called ``basic'' if it is
determined by all elements of the tuple $\{p_1,...,p_r\}$ which
belong to the same right $N(H)$-coset of $G$ \underbar{and} have
the same multiplicity.

Let $\sigma_{1},...,\sigma_{t}$ be coset representatives of the
cosets of $H$ in $N(H)$ and $g_{1}=e, g_{2},\ldots, g_{n}$ be
coset representatives of the right cosets of $N(H)$ in $G$. For
any element $g \in \{g_{1}=e, g_{2},\ldots, g_{n}\}$, we consider
the representatives (for the right cosets of $H$ in $G$) given by
$\sigma_{1}g,...,\sigma_{r}g$. Clearly these representatives
determine a big block. We refer to this set as the set of
representatives of the big block determined by $g$. Note that in
the elementary grading for $A$ or for $B$ these representatives
may appear with different multiplicities. Next we fix a subblock
of the big block determined by $g$ by fixing the coset
representatives $\sigma_{1}g,...,\sigma_{m}g$ of $H$ in $G$.

Now, each one of these representatives, say $\sigma_{k} g \in
N(H)g$, conjugates the cocycle $\alpha$ into a cocycle
$\alpha^{(\sigma_k g)^{-1}}$ on the group $H^{g^{-1}}=g^{-1}H{g}$
and we \underbar{claim} that the sets of cocycles on  $H^{g^{-1}}$
which are obtained by conjugating the cocycles $\alpha$ and
$\beta$ by representatives of one subblock are the same (with
multiplicities).

Consider the set of cocycles $(\alpha^{(\sigma_{1}g)^{-1}},\ldots,
\alpha^{(\sigma_{m}g)^{-1})}$ obtained in the algebra $A$.  For
every cocycle $\alpha^{(\sigma_{k}g)^{-1}}$ we choose a set of
representatives for all cohomology classes on  $H^{g^{-1}}$ which
are different from the class represented by
$\alpha^{(\sigma_{k}g)^{-1}}$  and denote this set by $S_{g,k}$.

For each  cocycle $\gamma$ in $S_{g,k}$ we may construct, by
Lemma~\ref{Karasik}, a  polynomial   which is an identity for
$F^{\gamma}H^{g^{-1}} \otimes M_{d}(F)$ (where the elementary
grading is trivial) but not for $F^{\alpha^{(\sigma_{k}g)^{-1}}}
H^{g^{-1}} \otimes M_{d}(F)$. Moreover, if we take the product of
these polynomials (using different sets of variables) we obtain a
multilinear polynomial
$R({g,\widehat{\alpha}^{(\sigma_{k}g)^{-1}},d})$ which is an
identity for  the algebra $F^{\gamma}H^{g^{-1}} \otimes M_{d}(F)$
for all $\gamma \in S_{g,k}$, and  is a nonidentity if
$\gamma=\alpha^{(\sigma_{k}g)^{-1}}$.

We now construct a monomial for the   $g=g_i$ big block.    We
begin with the monomial $Z_{g^{-1}Hg, A}$  we considered in step
8.    This monomial was constructed by considering the graded
simple components of $A_{g^{-1}Hg}$.   Each such simple component
has a group part that is a subgroup of $H^{g^{-1}}$.  We will
alter the segments that come from simple components in which the
group part is all of $H^{g^{-1}}$.   Such a component comes from a
single coset representative $\sigma g$ in $N(H)g$  (with its
multiplicity).   For each such components we change the
corresponding segment of $Z_{g^{-1}Hg, A}$ by inserting, between
the end of the segment and the bridge to the next segment,   the
polynomial  $R({g,\widehat{\alpha}^{(\sigma g)^{-1}},d})$  (with
new variables),   where $d$ is the multiplicity of the
representative $\sigma g$  (which is also the matrix size of the
simple component).    Denote this new monomial
$\tilde{Z}_{g^{-1}Hg, A}$ .  We now use alternation to produce a
multilinear polynomial which we will denote  $\tilde{f}_{g^{-1}Hg,
A}$.

By its construction $\tilde{f}_{g^{-1}Hg, A}$ is clearly a
$G$-graded nonidentity of $A$ and hence, by assumption, it is also
a $G$-graded nonidentity of $B$.   We may therefore assume that
there is a nonzero evaluation of the monomial
$\tilde{Z}_{g^{-1}Hg, A}$ on $B$.   By the proof of
Proposition~\ref{subgroup decomposition}   there is a one-to-one
correspondence between the simple components of the big block of
$A$ coming from the coset $N(H)g$ and the simple components of the
big block for $B$ corresponding to the coset $N(H)g$  such that in
the evaluation of the monomial the segments of the monomials
determined by the simple components of the big block of $A$ are
evaluated at the corresponding simple components of that big block
for $B$.    But because of the inserted polynomials, say
$R({g,\widehat{\alpha}^{(\sigma g)^{-1}},d})$, we \underbar{claim}
the evaluation can be nonzero only if the cocycle on $H^{g^{-1}}$
determined by the coset representative of $H$ which evaluates the
segment, say $\tau g$, is cohomologous to $\alpha^{(\sigma
g)^{-1}}$. This will say that the cocycles $\alpha^{(\sigma
g)^{-1}}$ and $\beta^{(\tau g)^{-1}}$ are cohomologous in
$Z^{2}(H^{g^{-1}},F^{*})$ as desired. To prove the last claim we
note that the $Y$ variables in Regev's polynomials which appear in
$R({g,\widehat{\alpha}^{(\sigma g)^{-1}},d})$  are $e$ variables
and their cardinality is $d^{2}$. Next, since the set is
alternating we must evaluate the $Y$ variables by linearly
independent element. The $X$ variables of a Regev polynomial which
appears in $R({g,\widehat{\alpha}^{(\sigma g)^{-1}},d})$  all but
one of degree $e$ and one variable is of homogeneous degree in
$H^{g^{-1}}$. At any rate all variables are $H^{g^{-1}}$ variables
and so they must come from one single $H^{g^{-1}}$-block. But if
this $H^{g^{-1}}$-block is not the same as determined by the
segment evaluation we get zero. The upshot of this is that the
$H^{g^{-1}}$-block of $B$ which evaluates the segment (in a
nonzero evaluation) must determine a cocycle which does not
annihilate $R({g,\widehat{\alpha}^{(\sigma g)^{-1}},d})$. In other
words the cocycle must be cohomologous to $\alpha^{(\sigma
g)^{-1}}$. This completes the proof that the cocycles appearing in
a sub-block of $A$ and the corresponding sub-block of $B$ must be
the same up to permutation.

We are now reduced to the situation where the
multiplicities of the cocycles appearing in each basic block for the
algebras $A$ and $B$ coincide. In particular we know that the
cocycles $\alpha$ and $\beta$ are conjugate by an element of
$N(H)$.

The final step will be to show that up to equivalence we may
assume $\alpha$ and $\beta$ are actually cohomologous.    To do
this we will  produce an element  $b$ in $N(H)$  such that left
multiplication of the $r$-tuple $(p_{1},...,p_{r})$ permutes the
representatives in each big block in such a way that it preserves
multiplicities and conjugates $\alpha$ to $\beta$. Then by our
basic moves, the presentations $P_{A}$ and $P_{B}$ will be
equivalent.  The argument is similar to the proof of step 8.

   We start with the monomials
$\tilde{Z}_{g_i^{-1}Hg_i, A}$ constructed above,     where $g_1=e,
g_2, \dots, g_s$ are the distinct coset representatives of $N(H)$
in $G$ appearing in the tuple for $A$ (and $B$).   We then form
the product of these monomials bridging successive monomials with
variables whose weights allow a nonzero evaluation using the
standard basis elements for $A$.    Call this big monomial
$\tilde{Z}_A$.   We then perform successive alternations of the
variables of a given weight appearing in each of the monomials
$\tilde{Z}_{g_i^{-1}Hg_i, A}$.   Call this polynomial
$\tilde{f}_A$.   This is a nonidentity for $A$ and so must be a
nonidentity for $B$.    So one of the monomials of $\tilde{f}_A$
must be nonzero on the standard basis of $B$ and as we saw in the
proof of Proposition~\ref{subgroup decomposition} we may assume
this monomial is  $\tilde{Z}_A$.    In particular each of the
submonomials  $\tilde{Z}_{g_i^{-1}Hg_i, A}$ must have a nonzero
evaluation.  We denote by $\sigma_{i,k}$ a typical  representative
of the cosets of $H$ in $N(H)$ such that $\sigma_{i,k}g_i$ appears
in the tuple for $A$ (and $B$).    The nonzero evaluation of
$\tilde{Z}_A$  then produces a permutation  $\pi$  on the tuple
for $B$ that preserves the subtuples coming from each coset
representative $g_i$ of $N(H)$ in $G$ and so  takes a block
corresponding to the coset representative $\sigma_{i,k}g_i$ to the
block coming from the coset representative $\sigma_{i,\pi(k)}g_i$.
We also know that the cocycles  $\alpha^{(\sigma_{i,k}g_i)^{-1}}$
and $\beta^{(\sigma_{i,\pi(k)}g_i)^{-1}}$ (and hence
$\alpha^{\sigma_{i,k}^{-1}}$  and
$\beta^{\sigma_{i,\pi(k)}^{-1}}$) are cohomologous.

  As before
the bridge between the $\sigma_{i,k}g_i$  block and the
$\sigma_{j,m}g_j$  block must also serve as a bridge between the
$\sigma_{i,\pi(k)}g_i$  block and the $\sigma_{j,\pi(m)}g_j$ block
and so the set

$$
g_i^{-1}\sigma_{i,k}^{-1}H\sigma_{j,m}g_{j}\cap g^{-1}_{i}\sigma^{-1}_{i, \pi(k)}H\sigma_{j,\pi(m)}g_{j}
$$
must be nonempty.    Canceling  we obtain
$$\sigma_{i,k}^{-1}H\sigma_{j,m}\cap \sigma^{-1}_{i,
\pi(k)}H\sigma_{j,\pi(m)} $$  is nonempty.     It follows that
there are elements $h_{j,m} \in H$, $j \in \{1,2,\dots, n\}$ and
$m$ running over the subscripts for the representatives in the
$j$-th big block,   such that
$$
\sigma_{j,m}=(\sigma_{1,1}\sigma^{-1}_{1,\pi(1)})h_{j,m}\sigma_{j,\pi(m)}
$$
Recall that by basic move (2) we may replace any element of the
tuple for $B$ by a different representative of the $H$-coset (that
is replace $\sigma_{j, \pi(m)}$ by $h_{j,m}\sigma_{j,\pi(m)}$). We
see therefore that the tuple for $A$ is obtained from the tuple
for $B$ multiplying on the left  by
$\gamma=(\sigma_{1,1}\sigma^{-1}_{1,\pi(1)})$.  This element is in
the normalizer so we are left with showing  $\beta^\gamma$ is
cohomologous to $\alpha$.   But we know that
$\alpha^{\sigma_{1,1}^{-1}}$  and $\beta^{\sigma_{1,\pi(1)}^{-1}}$
are cohomologous, so we are done.
 This completes the proof of the main theorem.

\section{Uniqueness}   We end the paper  with a proof,  without the use of identities,  of the uniqueness of the decomposition given in Theorem \ref{Bahturin-Sehgal-Zaicev}.

\begin{proposition}\label{uniqueness}
Let $A$ and $B$ be a simple $G$--graded algebras with
presentations $P_{A}=(H_A, \alpha, (p_1,\ldots,p_r))$ and
$P_{B}=(H_B, \beta, (q_1,\ldots,q_s))$, respectively.  If $A$ and
$B$ are $G$--graded isomorphic then $P_A$ and $P_B$ are
equivalent.
\end{proposition}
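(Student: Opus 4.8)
The plan is to reduce the statement to a single algebra and then recover the presentation data intrinsically. First I would note that if $A$ and $B$ are $G$-graded isomorphic, then every presentation of $B$ is also a presentation of $A$, so it suffices to prove: if one $G$-simple algebra $C$ admits presentations $(H,\alpha,(p_1,\dots,p_r))$ and $(H',\alpha',(q_1,\dots,q_s))$, then these are equivalent. Fixing $G$-graded isomorphisms $\psi\colon F^{\alpha}H\otimes M_r(F)\to C$ and $\psi'\colon F^{\alpha'}H'\otimes M_s(F)\to C$, I set $\phi=(\psi')^{-1}\psi$, write $\epsilon_i=1\otimes e_{ii}$ and $\eta_k=1\otimes f_{kk}$ for the standard diagonal idempotents on the two sides (all homogeneous of degree $e$ by Theorem~\ref{Bahturin-Sehgal-Zaicev}), and then carry out three reductions.

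First I would match the two idempotent systems. By Proposition~\ref{structure of N-graded subalgebra} with $N=\{e\}$, the identity component of either algebra is a direct sum of matrix algebras, with one summand $M_d(F)$ for each right coset of the grading group that occurs in the tuple with multiplicity $d$, and inside each summand the relevant $\epsilon_i$'s (resp.\ $\eta_k$'s) form a complete set of primitive orthogonal idempotents. Since $\phi$ carries $C_e$ isomorphically onto $(F^{\alpha'}H'\otimes M_s(F))_e$, the sets $\{\phi(\epsilon_i)\}$ and $\{\eta_k\}$ are complete systems of primitive orthogonal idempotents of one and the same semisimple algebra, with the same count on each simple factor; in particular $r=s$. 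By the standard conjugacy theorem for such systems there is a unit $v$ of that identity component --- hence an element of degree $e$, so that $\Ad(v)$ is a $G$-graded automorphism --- and a permutation $\pi$ with $v\,\phi(\epsilon_i)\,v^{-1}=\eta_{\pi(i)}$. Absorbing $\pi$ by basic move (1) on the second presentation and replacing $\phi$ by $\Ad(v)\circ\phi$, I may assume $\phi(\epsilon_i)=\eta_i$ for all $i$.

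Next I would recover the group and the tuple. In $F^{\alpha}H\otimes M_r(F)$ one has $\epsilon_i C\epsilon_j=\span\{u_h\otimes e_{ij}:h\in H\}$ with $u_h\otimes e_{ij}$ homogeneous of degree $p_i^{-1}hp_j$, so $\{g\in G:\epsilon_iC_g\epsilon_j\neq 0\}=p_i^{-1}Hp_j$, and likewise this set is $q_i^{-1}H'q_j$ on the other side; since $\phi$ is graded and $\phi(\epsilon_i)=\eta_i$ we get $p_i^{-1}Hp_j=q_i^{-1}H'q_j$ for all $i,j$. Taking $i=j=1$ gives $H=gH'g^{-1}$ with $g=p_1q_1^{-1}$, so applying basic move (3) to the second presentation with this $g$ makes its group equal to $H$ and its first tuple entry equal to $p_1$; since the move (3) isomorphism fixes every $1\otimes e_{kk}$, the normalization $\phi(\epsilon_i)=\eta_i$ survives. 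Now, with a common group $H$ and $q_1=p_1$, the identity $p_1^{-1}Hp_j=q_1^{-1}Hq_j$ forces $Hp_j=Hq_j$, so basic move (2) applied in each slot (again fixing the diagonal idempotents) replaces each $q_j$ by $p_j$; after this both presentations have group $H$ and tuple $(p_1,\dots,p_r)$.

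Finally I would recover the cocycle. With idempotents matched and tuples equal, $\phi$ restricts to a $G$-graded --- hence, via $h\mapsto p_1^{-1}hp_1$, an $H$-graded --- isomorphism of twisted group algebras $\epsilon_1C\epsilon_1\cong F^{\alpha}H\to F^{\alpha'}H\cong\eta_1C\eta_1$; since all graded components here are one-dimensional such a map has the form $u_h\mapsto\mu(h)u'_h$ with $\mu(h)\in F^{*}$, and comparing products yields $\alpha'=(\delta\mu)\,\alpha$, so $\alpha$ and $\alpha'$ are cohomologous and the presentations are equivalent (cohomologous cocycles being identified just as in the cocycle step of the proof of the main theorem). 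I expect the idempotent-matching step to be the main obstacle: one must verify that the two standard diagonal systems of primitive homogeneous idempotents can be conjugated into one another by a unit lying in the degree-$e$ component --- so that the conjugation is an honest $G$-graded automorphism --- and that the only residual freedom is a permutation of the tuple, i.e.\ basic move (1); once that is arranged, the group, the tuple, and the cohomology class of the cocycle are all forced by the description of the homogeneous components in Theorem~\ref{Bahturin-Sehgal-Zaicev}.
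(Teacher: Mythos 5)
Your proof is correct and its overall architecture --- match the diagonal idempotents, read off $H$ and the tuple from the degrees that occur in the corners $\epsilon_iC\epsilon_j$, then compare cocycles on a single corner --- is the same as the paper's. The one genuine organizational difference is in how the conjugacy of $H_A$ and $H_B$ is obtained. The paper establishes it \emph{before} normalizing the idempotents, using Proposition~\ref{structure of N-graded subalgebra} and the ``number of pages'' count: the block of $A_{H_A}$ coming from the representative $e$ has the maximal possible number of pages $|H_A|$, its $\phi$-image must therefore be a full block of $B_{H_A}$, and a block of $B_{H_A}$ is full only when the corresponding $q_i$ has $q_i^{-1}H_Bq_i\cap H_A$ of full size, forcing $H_A$ and $H_B$ to be conjugate. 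You instead normalize the idempotents first and then extract the equality of support sets $p_i^{-1}Hp_j=q_i^{-1}H'q_j$ directly from $\phi(\epsilon_iC_g\epsilon_j)=\eta_iC'_g\eta_j$; the case $i=j=1$ immediately yields the conjugacy, and the cases $i=1$ yield the tuple matching in the same stroke. This is a bit more economical --- it gets $r=s$, the conjugacy of $H_A,H_B$, and the coset relations $Hp_j=Hq_j$ all from one observation, and it avoids invoking the page count --- whereas the paper's ordering has the virtue of mirroring the step-by-step invariant list used in the identity-theoretic proof of the main theorem. For the cocycle step, restricting $\phi$ to $\epsilon_1C\epsilon_1\to\eta_1C'\eta_1$ is the same information as the paper's centralizer argument ($\phi$ preserves the elementary part, hence its centralizer $F^\alpha H$), so the two are interchangeable. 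One small point worth being explicit about in a write-up: you should note, as you indeed do, that the conjugating unit $v$ lies in the degree-$e$ component (which contains $1$ by Theorem~\ref{Bahturin-Sehgal-Zaicev}), so that $\Ad(v)$ is an honest $G$-graded automorphism, and that basic moves (2) and (3) fix the diagonal idempotents so the normalization $\phi(\epsilon_i)=\eta_i$ persists through the later reductions.
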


\begin{proof} Let $\phi:A\rightarrow B$ be a $G$-graded isomorphism.
Then $\phi$ must take the $e$--component of $A$ to the
$e$--component of $B$, that is, $\phi(A_e)=B_e$. So
$\phi|_{A_e}:A_e\rightarrow B_e$  is an  $F$--algebra isomorphism
between these two semisimple (ungraded) $F$--algebras. This
isomorphism will take each simple component of $A_ e$ onto a
simple component of $B_e$. In particular the dimensions of these
corresponding components will be the same. It follows that $r=s$,
the multiplicities of the $p_i's$ is the same as that of the
$q_i's$ and that by possibly rearranging the tuple for $B$ (a
basic move),  we may assume the $i$--th block of $A_e$ is sent to
the $i$-th block of $B_e$.   Also because $r=s$ the elementary
parts of $P_A$ and $P_B$ have the same dimension.  Because $A$ and
$B$ have the same dimension we infer that $H_A$ and $H_B$ have the
same cardinality.

The next step is to show that the subgroups $H_A$ and $H_B$ are
conjugate in $G$. By shifting the tuple of the elementary grading
of $A$ (basic move (3)) we may assume that in the tuple
$(p_1,...,p_r)$ the elements $p_i$ are listed in order of
nonincreasing multiplicity and that $p_1=e$.

Let $H=H_A$.  In view of the Proposition \ref{structure of
N-graded subalgebra} and Remark \ref{blocks and pages},  the
subalgebra  $A_{H}$ decomposes into a direct sum of $H$-simple
subalgebras where each $H$-simple is a diagonal block with a
certain number of pages. Note that at least one of the blocks (for
example the block coming from the representative $e$) is ``full''
in the sense that  it has the maximal possible number of pages
(namely, $|H|$).  In fact we get this maximal number precisely
when the coset representative is in the normalizer of $H$. Now we
consider $B_H=\phi(A_H)$.  If $H$ is not conjugate to $H_B$, then
there will be no block in the decomposition of $B_H$ in which we
get a full number of pages,  because the number of pages in each
block is   the cardinality of $q_i^{-1}H_Bq_i\cap H$ for some $i$.
But applying $\phi$ to one of the full blocks of $A_H$ will have
to give a full block in $B_H$, so we have a contradiction.
Therefore $H_A$ and $H_B$ are conjugate. Applying a basic move we
may assume $H_A=H_B$.

So at this point we have reduced to the case where  $H_A=H_B$,
$r=s$,  the multiplicities in the tuples are the same, and under
the isomorphism $\phi$ the $i$--th block of $A_e$ is sent to the
$i$-th block of $B_e$.   We show next that we can assume the
tuples for $P_A$ and $P_B$ are the same.

 Let $H=H_A(=H_B)$.
Let $m$ denote the number of distinct $H$--coset representatives
in each tuple and let  $d_1\geq d_2\geq d_3 \cdots\geq d_m$ denote
the multiplicities. Now consider the matrix units
$e_{i,j}(=1\otimes e_{i,j})$.   We look at the $F$--span of the
images $\phi(e_{11}), \phi(e_{2,2})\dots, \phi(e_{d_1d_1})$. This
is the $F$--span of commuting semisimple elements in the first
component of $B_e$ and so is conjugate in the first component to
the space of diagonal matrices there.   The conjugating element
$b_1$ is an element (of weight $e$) in the first block of $B_e$.
We do the same for all $m$  blocks of $A_e$, obtaining elements
$b_1,b_2,\dots, b_m$.   The sum $b=b_1+b_2+\cdots b_m$ is an
invertible element in $B_e$ and  the composite of $\phi$ with
conjugation by $b$  will take the space of diagonal elements of
$A_e$ to the space of diagonal elements of $B_e$.   Because $b$ is
an invertible element of weight $e$, conjugation by $b$ does not
change the presentation for $B$ at all.  So by possibly reordering
the elements in the tuple for $B$ we may assume
$\phi(e_{ii})=e_{ii}$ for all $i$,  $1\leq i\leq r$. It follows
that for all $i,j$, $1\leq i,j\leq r$,
$\phi(e_{ij})=\phi(e_{ii}e_{ij}e_{jj})=e_{ii}\phi(e_{ij})e_{jj}$.
Moreover if $t\not=i$, then $e_{tt}\phi(e_{ij})=0$ and if
$t\not=j$, then $\phi(e_{ij})e_{tt}=0$.   Hence we must have
$\phi(e_{ij})=\gamma_{ij}u_{h_{ij}}\otimes e_{ij}$,  for some
$h_{ij}\in H$ and $\gamma_{ij}\in F^\times$.  In particular,
letting $i=1$,  we see that
$\phi(e_{1j})=\gamma_{1j}u_{h_{1j}}\otimes e_{1j}$,  where
$\gamma_{11}=1$ and $h_{11}=e$. But the weight of $e_{1j}$ is
$p_j$ (recall that $p_1=e$) and the weight of
$\gamma_{1j}u_{h_{1j}}\otimes e_{1j}$ is $q_1^{-1}h_{1j}q_j$.
Hence for all $j$, $1\leq j\leq r$,  $p_j=q_1^{-1}h_{1j}q_j$.  In
other words $q_1p_j=h_{1j}q_j$. Also because the first block of
$B_e$ has a full number of pages (it corresponds to the first
block of $A_e$ which has a full number of pages),   the element
$a_1$ must normalize $H$.  So by applying a basic move of type 3
and one of type 1 we can replace $(q_1, \dots, q_r)$ by
$(p_1,\dots, p_r)$ without changing $H$.

Notice also at this point that $\phi$ takes the elementary part of
$P_A$ to the elementary part of $P_B$ (in fact for all $i,j$,
$\phi(e_{ij})$ is a nonzero constant multiple of $e_{ij}$) and so
takes the centralizer of the elementary part of $P_A$ to the
centralizer of the elementary part of $P_B$.   In other words
$\phi$ takes $F^\alpha H$ to $F^\beta H$  and is a graded
isomorphism between these two $G$--graded algebras.  It is easy to
see that it follows that $\alpha$ and $\beta$ are cohomologous, so
we are done.

\end{proof}

\end{document}